\documentclass[12pt]{amsart}
\pagestyle{myheadings}
\usepackage{amsmath}
\usepackage{amssymb}
\usepackage{amsthm}
\usepackage{amscd}
\usepackage{enumerate}
\usepackage{verbatim}
\usepackage[all]{xy}
\usepackage{mathrsfs}
\usepackage{url}

\theoremstyle{plain}
\newtheorem{thm}{Theorem}[section]
\newtheorem{prop}[thm]{Proposition}
\newtheorem{lem}[thm]{Lemma}
\newtheorem{cor}[thm]{Corollary}

\theoremstyle{definition}
\newtheorem{dfn}[thm]{Definition}
\newtheorem{rmk}[thm]{Remark}

\newcommand{\Hom}{\mathrm{Hom}}

\newcommand{\Spf}{\mathrm{Spf}}
\newcommand{\Spv}{\mathrm{Sp}}
\newcommand{\Spec}{\mathrm{Spec}}

\newcommand{\Gal}{\mathrm{Gal}}

\newcommand{\rig}{\mathrm{rig}}

\newcommand{\ord}{\mathrm{ord}}

\newcommand{\red}{\mathrm{red}}

\newcommand{\okey}{\mathcal{O}_K}

\newcommand{\cC}{\mathcal{C}}
\newcommand{\cD}{\mathcal{D}}

\newcommand{\cH}{\mathcal{H}}

\newcommand{\cO}{\mathcal{O}}

\newcommand{\cW}{\mathcal{W}}
\newcommand{\cX}{\mathcal{X}}
\newcommand{\cY}{\mathcal{Y}}

\newcommand{\Gm}{\mathbb{G}_\mathrm{m}}

\newcommand{\Cp}{\mathbb{C}_p}

\newcommand{\bA}{\mathbb{A}}

\newcommand{\bC}{\mathbb{C}}

\newcommand{\bQ}{\mathbb{Q}}

\newcommand{\bZ}{\mathbb{Z}}

\newcommand{\frX}{\mathfrak{X}}

\makeatletter
    
    \@addtoreset{equation}{section}
\makeatother

\makeatletter
\renewcommand{\p@enumii}{}
\makeatother

\begin{document}

\title[Irreducible components of the eigencurve]{Irreducible components of the eigencurve of finite degree are finite over the weight space}
\author{Shin Hattori}
\address[Shin Hattori]{Faculty of Mathematics, Kyushu University}
\author{James Newton}
\address[James Newton]{Department of Mathematics, King's College London}
\date{\today}


\begin{abstract}
Let $p$ be a rational prime and $N$ a positive integer which is prime to $p$. 
Let $\cW$ be the $p$-adic weight space for $\mathit{GL}_{2,\bQ}$. Let $\cC_N$ 
be the $p$-adic Coleman-Mazur eigencurve of tame level $N$. In this paper, we 
prove that any irreducible component of $\cC_N$ which is of finite degree over 
$\cW$ is in fact finite over $\cW$. 

Combined with an argument of Chenevier and 
a conjecture of Coleman-Mazur-Buzzard-Kilford (which has been proven in special cases, and 
for general quaternionic eigencurves) this shows that the only finite degree 
components of the eigencurve are the ordinary components.
\end{abstract}

\maketitle



\section{Introduction}

Let $p$ be a rational prime. Let $\bar{\bQ}_p$ be an algebraic closure of $\bQ_p$ and $\Cp$ the $p$-adic completion of $\bar{\bQ}_p$. For any element $a\in \Cp$, we denote its $p$-adic valuation by $v_p(a)$, which we normalize as $v_p(p)=1$, and its $p$-adic absolute value by $|a|_p=p^{-v_p(a)}$. 
Let $\cW$ be the weight space for $\mathit{GL}_{2,\bQ}$ over $\bQ_p$ satisfying
\[
\cW(\Cp)=\Hom_{\mathrm{cont.}}(\bZ_p^\times,\cO_{\Cp}^\times).
\]

For any positive integer $N$ which is prime to $p$, we have a reduced rigid 
analytic curve $\cC_N$ over $\cW$, the Coleman-Mazur eigencurve, which 
parametrizes the overconvergent eigenforms of tame level $N$ and finite slope 
\cite{CM, Buz}. Though it plays an important role in number theory, the global 
geometry of $\cC_N$ is not well understood. However, with recent groundbreaking 
works including \cite{DL,LWX}, we are now gradually revealing what $\cC_N$ 
looks like globally.

Let $C$ be an irreducible component of $\cC_N$. We know by Hida theory that $C$ is finite over $\cW$ if it is in the ordinary locus $\cC_N^\ord$. Conversely, Chenevier proved that if $C$ is finite over $\cW$, then a conjecture of Coleman-Mazur-Buzzard-Kilford (see \cite[Conjecture 1.2]{LWX}), claiming that the slopes should tend to zero near the boundary of $\cW$, implies that $C$ is in $\cC_N^\ord$ \cite[Chapitre 1, \S 3.7]{Che}. Note that the conjecture is now known to be true in many cases \cite{BuzKil,LWX,Roe}.

An irreducible component $C$ of $\cC_N$ is said to be of finite degree if the 
degree of any fiber of the weight map $C \to \cW$ is finite (we do not impose 
the requirement that the map from $C \to \cW$ is quasi-compact, so this is a 
more general notion than a component which is quasi-finite over the weight space). 
Otherwise, it is 
said to be of infinite degree. 
In \cite[\textit{loc. cit.}]{Che}, Chenevier predicts that any irreducible 
component of finite degree should be finite over $\cW$, and that this should 
follow from the \textit{properness} of $\cC_N$ in the sense of Buzzard-Calegari 
\cite{BuzCal}. The latter property means that any morphism from the punctured 
closed unit disc to $\cC_N$ extends to the puncture, and Diao-Liu proved that 
$\cC_N$ is proper in this sense, at least when the morphism is defined over a 
finite extension of $\bQ_p$ \cite[Theorem 1.1]{DL}. In short, what Chenevier 
expected can be viewed as a version of `quasi-finite plus proper implies 
finite'.

The problem is that this properness is a quite different notion from 
the properness in the usual sense in rigid analytic geometry. In fact, since $\cC_N$ is finite over $\cW\times \Gm$ and thus it is partially proper over $\cW$, the weight map $\cC_N\to \cW$ is proper in the usual sense if and only if it is quasi-compact. However,
since the slopes of classical cuspforms of level $\Gamma_1(Np)$ and weight $k$ can be arbitrarily large as $k$ increases, 
the map $\cC_N\to \cW$ is never quasi-compact. Moreover, the partial properness over $\cW$ does not necessarily imply the properness in the above sense; for example, the natural open immersion from the punctured open unit disc $A(0,1)$ to any of the connected components of $\cW$ is partially proper, while $A(0,1)$ does not have the extension property as above.
In this paper, we nonetheless prove that Chenevier's prediction is true, as follows.

\begin{thm}\label{main}
Let $C$ be an irreducible component of the eigencurve $\cC_N$ which is of 
finite degree. Then $C$ is finite over $\cW$.
\end{thm}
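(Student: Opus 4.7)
The plan is to derive Theorem~\ref{main} from the Buzzard--Calegari properness of $\cC_N$ established by Diao--Liu (\cite[Theorem 1.1]{DL}), confirming Chenevier's prediction in the way he originally envisaged. My first task will be to reduce the statement to a slope-boundedness assertion. Since $\cC_N$ is finite over $\cW\times\Gm$ it is partially proper over $\cW$, and the same holds for its closed subspace $C$. Hence $C$ is finite over $\cW$ if and only if the weight map $C\to\cW$ is quasi-compact, and because $\cW$ admits an admissible covering by affinoid opens $\cU$, it suffices to prove that $C|_{\cU}:=C\times_{\cW}\cU$ is quasi-compact for every such $\cU$. Over $\cU$ the spectral theory of the compact operator $U_p$ produces a slope decomposition
\[
\cC_N|_{\cU}\;=\;\bigcup_{h\ge 0}\cC_N|_{\cU}^{\le h},\qquad \cC_N|_{\cU}^{\le h}=\{v_p(U_p)\le h\},
\]
in which each piece is affinoid and finite over $\cU$. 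Since $C$ is closed in $\cC_N$, it follows that $C|_{\cU}$ is quasi-compact (in fact, affinoid and finite over $\cU$) precisely when $v_p(U_p)$ is bounded on it.

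With this reduction in hand, I argue by contradiction, assuming that $v_p(U_p)$ is unbounded on $C|_{\cU}$ for some affinoid $\cU\subset\cW$. I choose a sequence of $\Cp$-points $x_n\in C|_{\cU}(\Cp)$ with $v_p(U_p(x_n))\to\infty$ and, after extracting a subsequence using the quasi-compactness of $\cU(\Cp)$, arrange that the weights $\pi(x_n)$ converge to some $w_\infty\in\cU(\Cp)$. The aim is to interpolate a tail of this sequence by a rigid-analytic arc: more precisely, to produce a morphism $\gamma\colon D^{*}_{L}\to C$ from a punctured open unit disc over a finite extension $L/\bQ_p$ such that $U_p\circ\gamma$ tends to $0$ as the parameter approaches the puncture. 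Granting such a $\gamma$, the Diao--Liu properness yields an extension $\bar{\gamma}\colon D_L\to\cC_N$, and by continuity $U_p(\bar{\gamma}(0))=\lim_{z\to 0}U_p(\gamma(z))=0$, contradicting the inclusion $\cC_N\subset\cW\times\Gm$, which forces $U_p$ to be nowhere vanishing on $\cC_N$.

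The construction of the arc $\gamma$ is the principal obstacle. My intended approach is to pass to the normalization $\tilde{C}\to C$, which is a smooth irreducible one-dimensional rigid analytic curve. Because $C$ is of finite degree, the composition $\tilde{C}\to\cW$ has finite fibres, so the failure of quasi-compactness of $\tilde{C}|_{\cU}$ over $\cU$ must be witnessed by ``ends'' in the structure theory of smooth one-dimensional rigid analytic curves; each such end admits a parametrisation by an open annulus or a punctured open disc defined over a finite extension of $\bQ_p$. A tail of the sequence $(x_n)$ must then lie inside one such end, and the associated parametrisation provides the desired morphism on which $U_p$ degenerates to $0$ at the puncture. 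The delicate point will be to give a rigorous rigid-analytic description of the ends of $\tilde{C}$ over an affinoid slice of $\cW$, verifying that the escape to infinite slope can be traced along a single analytic branch rather than dispersing pathologically across many branches, and this is precisely where the finite-degree hypothesis is indispensable.
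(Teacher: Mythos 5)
Your reduction to quasi-compactness of $C|_{\cU}\to\cU$, and from there to boundedness of $v_p(U_p)$ on $C|_{\cU}$, is correct and echoes the way one would like to use Diao--Liu's properness theorem. However, the core of the argument---producing a morphism $\gamma\colon D^{*}_L\to C$ over a \emph{finite} extension $L/\bQ_p$ along which the sequence of escaping points concentrates and on which $U_p$ tends to $0$---is left entirely open, and this is precisely where the real mathematical content of the theorem lies.

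There is no general ``structure theory of ends'' for one-dimensional rigid analytic curves that hands you an open annulus or punctured disc parametrising each escape route, defined over a finite extension, with $U_p$ tending to $0$. You would need to establish all of the following, and none is addressed: (i) that the non-quasi-compactness of $C|_{\cU}$ is witnessed by a single analytic branch rather than by, say, infinitely many isolated pieces jumping between branches; (ii) that such a branch admits a parametrisation by a punctured disc or annulus, and that this parametrisation descends from $\Cp$ to a finite extension of $\bQ_p$ (Diao--Liu only treats morphisms over finite extensions); (iii) that along this parametrisation the function $U_p$ actually tends to $0$, rather than merely being unbounded or oscillating without a limit. Point (iii) is especially problematic: an unbounded rigid-analytic function on a punctured disc need not extend with a zero at the puncture, and nothing in the finite-degree hypothesis by itself forces the slope to go to infinity along a single analytic arc.

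The paper resolves exactly these difficulties, but by a route quite different from a direct contradiction on slopes. After reducing (via an induction on tame level through degeneracy maps, Lemma~\ref{qTU}) to the case of an $N$-new component, it shows that $\pi_\alpha\colon C\to Z_\alpha$ lands in a Fredholm hypersurface $Y$ of finite degree and that this map is a \emph{generic isomorphism} away from discrete sets (Lemma~\ref{genisom}). The crucial structural input is then that a finite-degree Fredholm hypersurface has a finite flat projective closure over $\cW^0$, so the $p$-adic Riemann existence theorem of L\"utkebohmert--Ramero applies and the covering is locally of Kummer type; the explicit integral form of the defining power series (Lemma~\ref{intcl}) is what lets one descend this Kummer parametrisation from $\Cp$ to a finite extension. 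The resulting local sections from punctured discs (minus discrete bad loci) into $C$ are then extended across the holes using Diao--Liu's properness and Galois descent (Lemma~\ref{PPdiscr}), and finiteness follows from Lemma~\ref{Yfinite} by boundedness of $T=U^{-1}$ on the image of an affinoid. Your proposal identifies the right tools (properness, slope decomposition, finite degree) but the ``delicate point'' you flag is not a technicality: it is the heart of the argument, and filling it essentially forces one onto the Fredholm-hypersurface path of the paper.
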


By combining the theorem with known cases of the Coleman-Mazur-Buzzard-Kilford conjecture, 
we also prove that irreducible components of eigencurves of finite degree are 
in the ordinary loci in many cases (see \S\ref{SecApp}), which gives a partial answer to a question raised by Coleman-Mazur \cite[p. 5]{CM}.

We end this introduction by sketching the proof of Theorem \ref{main}. We may 
assume that $C$ is a non-Eisenstein component. If $C$ contains classical 
oldforms as a Zariski dense subset, then we will relate $C$ to an irreducible 
component of the eigencurve of a smaller tame level via degeneracy maps (Lemma 
\ref{qTU}). By an induction on the tame level, this will reduce ourselves to 
the case where $C$ contains enough classical newforms. 

In this case, we will show that the projection to a spectral curve induces a 
`generic isomorphism' from $C$ to a Fredholm hypersurface $Y$ of finite degree 
(Lemma \ref{genisom}). A similar result for $N=1$ is stated in \cite[Corollary 
7.6.2]{CM}, while what we will prove is slightly weaker: there exists a finite 
set $F_0 \subseteq Y$, discrete subsets $E\subseteq Y\setminus F_0$ and 
$E'\subseteq (Y\setminus F_0)\setminus E$ such that $C\to Y$ is isomorphic over 
$((Y\setminus F_0)\setminus E)\setminus E'$.

By the $p$-adic Riemann existence theorem \cite{Lut,Ram}, 
for any $x\in \cW$ in the component to which $C$ is mapped, there exists a 
sufficiently small closed punctured disc $D^\times$ centered at $x$ such that 
the covering $Y|_{D^\times}\to D^\times$ is of Kummer type over $\bC_p$. 
Namely, it is isomorphic over $\bC_p$ to the direct sum of finitely many 
punctured closed discs centered at the origin and the projection from each 
connected component to $D^\times$ is given by the map $z\mapsto x+z^m$ with 
some positive integer $m$.
Since $Y$ is a Fredholm hypersurface over $\cW$, we can descend this isomorphism to a finite extension of $\bQ_p$ (Proposition \ref{Lutke}).

Hence $((Y\setminus F_0)\setminus E)\setminus E'$ is locally isomorphic to a 
punctured closed disc minus a discrete subset minus a discrete subset, and the 
above `generic isomorphism' gives a section from it to $C$. Now Diao-Liu's 
properness of the eigencurve enables us to fill the holes of discrete subsets 
and extend the section to the whole closed disc, which is enough to show the 
finiteness of $C$ (Lemma \ref{Yfinite}).

\subsection*{Acknowledgments}

S.H.~would like to thank Ga\"{e}tan Chenevier and Lorenzo Ramero for kindly 
answering his questions on their works. J.N.~would like to thank Rebecca Bellovin, Kevin Buzzard 
and Olivier Ta\"{i}bi for helpful conversations. Both authors would like to thank the author of the 
blog Persiflage whose post \cite{Persblog} initiated our interest in the 
question answered by this paper.
S.H.~was supported by JSPS KAKENHI Grant Number 26400016 and J.N.~was supported by ERC Starting Grant 306326. 



\section{Fredholm hypersurfaces}

Let $K/\bQ_p$ be a finite extension in $\bar{\bQ}_p$. Consider the formal power series ring $\cO_K [[w]]$ and put $\cW^0=\Spf(\cO_K[[w]])^\rig$. Let $Y$ be a Fredholm hypersurface in $\cW^0\times \bA^1$ \cite[\S 1.3]{CM} and $\mu:Y\to \cW^0$ the natural projection. We denote the parameter for $\bA^1$ by $T$. Let 
\[
Q(T)=1+ a_1(w) T + \cdots + a_n(w) T^n+\cdots
\]
be the Fredholm power series with coefficients in $\cO_K[[w]]$ defining $Y$. Note that, by the Weierstrass preparation theorem, every non-zero $a_n(w)$ has only finitely many zeros in $\cW^0$. If $a_n(w)\neq 0$ for infinitely many $n$, then the set 
\[
\bigcup_{m\geq 1}\bigcap_{n\geq m}\{x\in \cW^0\mid a_n(x)=0\}
\]
is countable and there exists $x\in \cW^0$ such that $\mu^{-1}(x)$ is an infinite set. In this case, we say $Y$ is of infinite degree. Otherwise, there exists an integer $n$ satisfying $a_m(w)=0$ for any $m>n$ and $a_n(w)\neq 0$. Then, for any $x\in \cW^0$, the degree of the fiber $\mu^{-1}(x)$ is bounded by $n$.
In this case, we say $Y$ is of finite degree. In the finite degree case, put
\begin{equation}\label{eqnB}
B=\cO_K[[w]][U]/(U^n+ a_1(w) U^{n-1} + \cdots + a_n(w))
\end{equation}
and $X=\Spf(B)^\rig$, which is the projective closure of $Y$ over $\cW^0$. Then 
$X$ is finite flat over $\cW^0$ and $Y$ is identified with the Zariski open 
subspace $X[1/U]$ of $X$ via $U=T^{-1}$. Note that $Y$ is the complement of a 
subset of the zeros of $a_n(w)$ in $X$. In both finite and infinite degree 
cases, the image of $Y$ in $\cW^0$ is the complement of a finite set.

\begin{lem}\label{Yfinite}
Let $D$ be an admissible affinoid open subset of $\cW^0$ and put $Y|_D=Y\times_{\cW^0} D$. Suppose that there exist a $K$-affinoid variety $S$ which is finite over $D$ and a morphism $f: S\to Y|_D$ over $D$ such that $Y|_D\setminus f(S)$ is a finite set. Then $Y|_D$ is finite over $D$.
\end{lem}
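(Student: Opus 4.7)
The plan is to exploit the fact that the coordinate $T$ on $\bA^1$ is bounded on any $K$-affinoid mapping into $Y|_D$, and then to use the slope decomposition of $Q$ to collapse $Y|_D$ onto a slope-bounded subspace which is finite over $D$.

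Since $S$ is $K$-affinoid and $T$ is a global coordinate on $\bA^1$, the function $T \circ f \in \cO(S)$ has finite supremum norm, say $|T \circ f|_{\sup} \leq p^h$ for some integer $h$; hence $f(S) \subseteq D \times B(0, p^h)$, where $B(0, p^h) \subseteq \bA^1$ is the closed disc of radius $p^h$. Since the finitely many points of $Y|_D \setminus f(S)$ also have bounded $T$-coordinate, after enlarging $h$ we may assume that every classical point of $Y|_D$ satisfies $|T| \leq p^h$.

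Next I would invoke the slope decomposition of the Fredholm series $Q(T)$ over $D$ (or over each member of a sufficiently fine affinoid cover of $D$; since finiteness of a morphism can be checked admissible-locally on the target, passing to such a cover is harmless). For every integer $m \geq 0$ one obtains a factorization $Q = P^{\leq m} \cdot R^{>m}$ in which $P^{\leq m} \in \cO(D)[T]$ is a polynomial whose zeros are precisely the zeros of $Q$ with $|T| \leq p^m$, and $R^{>m}$ is a unit of the affinoid algebra $\cO(D \times B(0, p^m))$. Consequently the admissible open $Y|_D^{\leq m} := Y|_D \cap (D \times B(0, p^m))$ is cut out in $D \times B(0, p^m)$ by $P^{\leq m}$ and is finite over $D$. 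Moreover, $P^{\leq m}$ and $P^{\leq m+1}/P^{\leq m}$ have roots in disjoint annuli and are therefore coprime, which yields an idempotent decomposition $Y|_D^{\leq m+1} = Y|_D^{\leq m} \sqcup Z_m$ with $Z_m$ a finite $D$-affinoid.

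By the first step, every classical point of $Y|_D$, in particular every classical point of $Y|_D^{\leq m+1}$, already lies in $Y|_D^{\leq h}$ as soon as $m \geq h$; so $Z_m$ has no classical points. Since any nonempty finite affinoid $D$-algebra admits a maximal ideal, and hence a classical point, $Z_m = \emptyset$, and by induction $Y|_D^{\leq m} = Y|_D^{\leq h}$ for all $m \geq h$. As $\{Y|_D^{\leq m}\}_m$ forms an admissible cover of $Y|_D$, we conclude that $Y|_D = Y|_D^{\leq h}$ is finite over $D$. The hardest part will be invoking the slope decomposition machinery on a general affinoid $D \subseteq \cW^0$ so as to produce simultaneously the finiteness of each $Y|_D^{\leq m}$ and the idempotent splitting above; this is standard in the Coleman--Mazur and Buzzard theory of Fredholm hypersurfaces, but requires some care (and possibly a further restriction to a sufficiently fine affinoid cover of $D$).
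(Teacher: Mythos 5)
Your argument is correct, but it follows a genuinely different route from the paper's. Both proofs start the same way, by using the affinoidness of $S$ together with the finitely many exceptional points to bound $|T|$ on all of $Y|_D$. From that point the paper argues much more elementarily: boundedness of $T$ exhibits $Y|_D$ as a Zariski-closed subvariety of $D\times B[0,r]$, hence affinoid; then since $Y|_D$ is equidimensional of dimension one, no irreducible component can be absorbed by a finite set, so $f(S)$ is Zariski dense and the map $\cO((Y|_D)_\red)\to\cO(S_\red)$ is injective; as $\cO(S_\red)$ is a finite module over the Noetherian ring $\cO(D)$, so is its submodule $\cO((Y|_D)_\red)$, and a short d\'evissage on the (nilpotent) nilradical upgrades this to finiteness of $\cO(Y|_D)$. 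You instead invoke the Coleman--Buzzard Riesz/slope-factorization theory to cut $Y|_D^{\leq h}$ out of $D\times B[0,p^h]$ by a polynomial, obtaining finiteness over $D$ directly. This works, though it brings in substantially more machinery; moreover it is worth noting that your proof never uses the hypothesis that $S$ is finite over $D$ (only that $S$ is $K$-affinoid), so it establishes a slightly stronger statement, whereas the paper's argument uses the finiteness of $S\to D$ crucially in the last step. A few small points about your write-up: the induction on $m$ and the idempotent splitting are avoidable, since once every classical point of $Y|_D$ lies in $Y|_D^{\leq h}$ the admissible open $Y|_D\setminus Y|_D^{\leq h}$ has no classical points and is therefore empty; when you assert that $P^{\leq m+1}/P^{\leq m}$ is a polynomial you should justify that $P^{\leq m}$ divides $P^{\leq m+1}$ exactly in $\cO(D')[T]$ (this follows from uniqueness of the slope factorization); and, since the slope factorization at a given radius $p^h$ exists only locally on $D$ and only when no roots sit on the bounding circle, you should allow both a slope-adapted refinement of the cover of $D$ and a member-by-member enlargement of $h$.
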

\begin{proof}
Let $T$ be the parameter of $\bA^1$ as above. This defines a rigid analytic function $T|_{Y|_D}$ on $Y|_{D}$. Since $S$ is affinoid, the pull-back of $T|_{Y|_{D}}$ to $S$ is bounded and the supremum semi-norm of $T|_{Y|_{D}}$ on the subset $f(S)\subseteq Y|_{D}$ is also bounded. Since the complement of $f(S)$ is a finite set, $T|_{Y|_{D}}$ is bounded on $Y|_{D}$. This means that the natural closed immersion $Y|_{D}\to D\times \bA^1$ factors through $D\times B[0,r]$ with the closed ball $B[0,r]$ of some radius $r$ centered at the origin. Thus $Y|_{D}$ is affinoid. 

Consider the nilreduction $(Y|_D)_\red$ of $Y|_D$. Since $f(S)\subseteq Y|_D$ is Zariski dense, the affinoid map $\cO((Y|_D)_\red)\to \cO(S_\red)$ is an injection. Since $\cO(S_\red)$ is finite over $\cO(D)$, so is $\cO((Y|_D)_\red)$. Hence $(Y|_D)_\red$ and $Y|_D$ are finite over $D$. 
\end{proof}

\begin{lem}\label{finet}
Suppose that $Y$ is of finite degree and reduced. Then there exists a finite subset $F$ of $\cW^0$ such that the map $Y\setminus \mu^{-1}(F) \to \cW^0\setminus F$ is finite etale. 
\end{lem}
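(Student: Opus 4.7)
The plan is to remove two finite subsets from $\cW^0$ and check that on the complement the map becomes finite etale.

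First, I would use the finite-degree hypothesis to pass to the projective closure $X=\Spf(B)^\rig$ of $Y$ from (\ref{eqnB}). Since $B$ is finite free of rank $n$ over $\cO_K[[w]]$, the projection $X\to\cW^0$ is finite flat of rank $n$. By the Weierstrass preparation theorem the set $F_1=\{w\in\cW^0\mid a_n(w)=0\}$ is finite, and the excerpt already records that $X\setminus Y\subseteq \mu^{-1}(F_1)$, so $Y\setminus\mu^{-1}(F_1)=X\setminus\mu^{-1}(F_1)\to \cW^0\setminus F_1$ is already finite. For the etaleness, I would take $F_2$ to be the zero locus of the discriminant $\Delta(w)\in\cO_K[[w]]$ of $P(U)=U^n+a_1(w)U^{n-1}+\cdots+a_n(w)$. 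Provided $\Delta\neq 0$, Weierstrass preparation again makes $F_2$ finite, and at each $w\notin F_1\cup F_2$ the polynomial $P$ has pairwise distinct roots, so the finite flat map $X\to\cW^0$ is unramified, hence finite etale, above $w$. Setting $F=F_1\cup F_2$ then yields the desired conclusion.

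The core claim is therefore that reducedness of $Y$ forces $\Delta\neq 0$, which I would prove by contradiction. Assume $\Delta=0$ and cover $\cW^0$ by admissible affinoid open disks $V=\Spm(A_V)$ where each $A_V$ is a one-variable Tate algebra (in particular a PID, and a one-dimensional regular domain). Then $A_V[U]/(P(U))$ is finite free of rank $n$ over $A_V$, and the discriminant of $P$ over $A_V$ is the image of $\Delta$, hence zero. So over $\Frac(A_V)$ the polynomial $P$ has a repeated irreducible factor and $\Frac(A_V)[U]/(P(U))$ contains a non-zero nilpotent. Since $A_V[U]/(P(U))$ is $A_V$-torsion-free it embeds into this ring, and clearing denominators yields a non-zero nilpotent $\epsilon\in A_V[U]/(P(U))$.

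The step I expect to require the most care is propagating $\epsilon$ to a non-zero nilpotent section on the Zariski open $Y|_V\subseteq X|_V$, since a priori $\epsilon$ might vanish on $Y|_V$ and only be detected by the finitely many points of $X|_V\setminus Y|_V$. My plan is to invoke that $A_V[U]/(P(U))$ is a finite free module over the one-dimensional regular domain $A_V$, hence is Cohen--Macaulay and has no embedded components. A non-zero element $\epsilon$ therefore has $1$-dimensional support in $X|_V$, which cannot be contained in the $0$-dimensional set $X|_V\setminus Y|_V\subseteq \mu^{-1}(F_1\cap V)$; so $\epsilon|_{Y|_V}$ is still a non-zero nilpotent, contradicting the reducedness of $Y$.
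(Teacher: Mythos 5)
Your proposal is correct, but it takes a genuinely different route from the paper. The paper works with the discriminant \emph{ideal} of $A=\cO_K[[w]]\to B$, uses that $A$ is a UFD to conclude principality, and then cites de Jong's Lemma~7.1.9 --- together with the observation that each $(B\otimes K)/\mathfrak{q}$ has infinitely many maximal ideals --- to transfer reducedness of the rigid space $Y$ to reducedness of $\Spec(B\otimes K)$ outside finitely many closed points, whence the map to $\Spec(A\otimes K)$ is generically \'etale and $I\otimes K\neq 0$. You instead argue by contradiction from $\Delta=0$ entirely within affinoid algebras: over each disk $V$ you produce a nonzero nilpotent $\epsilon$ in $\cO(X|_V)=A_V[U]/(P)$, and the Cohen--Macaulay / no-embedded-primes structure of this finite free $A_V$-algebra forces $\Supp(\epsilon)$ to be $1$-dimensional, so $\epsilon$ cannot die on the cofinite open $Y|_V$. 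This correctly addresses the one delicate point (that a nilpotent on $X|_V$ might \emph{a priori} be supported only on $X|_V\setminus Y|_V$), which in the paper is absorbed into the citation of de Jong. Your version is more self-contained and avoids the external reference, at the cost of a slightly longer commutative-algebra argument; both proofs are sound.
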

\begin{proof}
Let $B$ be as in (\ref{eqnB}) and $I$ the discriminant ideal of the finite 
locally free map $A=\cO_K[[w]]\to B$. Then $I$ is a locally principal ideal of 
$A$. Since $A$ is a UFD, the ideal $I$ is principal. By the Weierstrass 
preparation theorem, it is enough to show $I\otimes_{\okey} K\neq 0$ in the 
ring $A\otimes_{\cO_K}K$. Note that for any prime ideal $\mathfrak{q}$ of 
$B\otimes_{\cO_K}K$ above $(0)$, the ring $(B\otimes_{\cO_K}K)/\mathfrak{q}$ is 
a finite extension of $A\otimes_{\cO_K}K$ and thus contains infinitely many 
maximal ideals. From \cite[Lemma 7.1.9]{deJong} and the assumption, we see that 
$\Spec(B\otimes_{\okey} K)$ is reduced outside a finite set of closed points. 
This implies that
the map $\Spec(B\otimes_{\okey} K)\to \Spec(A\otimes_{\okey} K)$ is etale over 
the generic point of the target. Hence we obtain $I\otimes_{\okey} K\neq 0$.
\end{proof}

For any quasi-separated rigid analytic variety $\cX$ over $K$ and any extension $L/K$ of complete valuation fields, we write $\cX_{L}=\cX\hat{\otimes}_{K}L$. 

\begin{prop}\label{Lutke}
Suppose that $Y$ is of finite degree and reduced. Let $x$ be any element of $\cW^0(K)$. Then there exists a closed disc $D$ of sufficiently small radius in $|K^\times|_p$ centered at $x$ such that $D$ is an admissible open subset of $\cW^0$ and, for the punctured disc $D^\times=D\setminus \{x\}$ and some finite extension $L/K$, the covering $Y_L |_{D^\times_L} \to D^\times_L$ is of Kummer type. Namely, there exists an isomorphism
\[
\coprod_{i=1,\ldots,M} D_{i,L}^\times \to Y_L |_{D_L^\times}
\]
over $D_L^\times$, where $D_{i,L}$ is a closed disc of some radius over $L$ centered at the origin $O$, $D^\times_{i,L}=D_{i,L}\setminus \{O\}$ and the map $D_{i,L}^\times \to D_{L}^\times$ is the finite flat surjection defined by $z\mapsto x+z^{m_i}$ for some positive integer $m_i$. 
\end{prop}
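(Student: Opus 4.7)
The plan is to obtain the Kummer decomposition first over $\bC_p$ using the $p$-adic Riemann existence theorem of Lütkebohmert--Ramero \cite{Lut,Ram}, and then to descend the resulting isomorphism to a finite extension of $K$ by a Galois-theoretic argument.

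By Lemma \ref{finet}, there is a finite subset $F \subseteq \cW^0$ such that $\mu: Y \setminus \mu^{-1}(F) \to \cW^0 \setminus F$ is finite étale. First I would choose a closed disc $D$ centered at $x$ of some radius $r \in |K^\times|_p$ satisfying $D \cap F \subseteq \{x\}$, so that $Y|_{D^\times} \to D^\times$ is finite étale. After shrinking $r$ further if necessary (keeping $r \in |K^\times|_p$), \cite{Lut,Ram} applied over $\bC_p$ yields an isomorphism
\[
\phi_{\bC_p}: \coprod_{i=1}^{M} D_{i,\bC_p}^{\times} \longrightarrow Y_{\bC_p}|_{D^{\times}_{\bC_p}}
\]
over $D^{\times}_{\bC_p}$, in which each $D_{i,\bC_p}$ is a closed disc of radius $r^{1/m_i}$ centered at the origin and the projection is given by $z \mapsto x + z^{m_i}$. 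The finite-degree hypothesis on $Y$ bounds $M$ and each $m_i$ by the global degree of $Y$.

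To descend $\phi_{\bC_p}$, I would use that the group of automorphisms of $\coprod_{i=1}^{M} D_{i,\bC_p}^{\times}$ over $D^{\times}_{\bC_p}$ is the finite group $\Gamma := \bigl(\prod_{i=1}^{M} \mu_{m_i}\bigr) \rtimes \Sigma$, where $\Sigma$ permutes components sharing a common $m_i$. The set of Kummer-type isomorphisms is therefore a $\Gamma$-torsor. Since $Y$, $D$ and $x$ are all defined over $K$, the Galois group $G_K = \Gal(\bar{\bQ}_p/K)$ acts continuously on this torsor via its action on $\bC_p$, and because $\Gamma$ is finite the action factors through a finite quotient $\Gal(L_0/K)$. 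I would then take $L$ to be a finite extension of $L_0$ whose value group contains each of the finitely many radii $r^{1/m_i}$. The isomorphism $\phi_{\bC_p}$ is then $G_L$-invariant, and Galois descent, applied annulus by annulus to the natural quasi-Stein exhaustion of $D^{\times}_L$ by closed $L$-affinoid annuli, produces the desired isomorphism $\phi_L$ over $L$.

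The main obstacle is this descent step, in particular the simultaneous arrangement of $L$ so that the component decomposition, the exponents $m_i$, the radii $r^{1/m_i}$, and the Kummer coordinates are all defined over $L$. Continuity of the $G_K$-action together with the finiteness of $\Gamma$ reduces this to elementary bookkeeping, feasible because the finite-degree hypothesis bounds the $m_i$ uniformly; the quasi-Stein (rather than affinoid) nature of $D^{\times}$ is dealt with by carrying out the descent on each annulus of the exhaustion and gluing.
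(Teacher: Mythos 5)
Your overall strategy matches the paper's: apply the $p$-adic Riemann existence theorem over $\Cp$, then descend the Kummer decomposition to a finite extension $L/K$. The difference is in how the descent is executed, and there your argument has a real gap rather than just "elementary bookkeeping."

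The crux is the assertion that $G_K$ acts \emph{continuously} on the finite $\Gamma$-torsor of Kummer isomorphisms, from which you conclude the action factors through a finite quotient. This continuity is not automatic: a priori you only know that the orbit of $\phi_{\Cp}$ is finite, i.e.\ that the stabilizer has finite index, and a finite-index subgroup of a profinite group need not be open. (One could invoke Nikolov--Segal plus the topological finite generation of $G_{\bQ_p}$ to rescue this, but that is a far heavier tool than the situation warrants and is clearly not what you intend by "elementary bookkeeping.") To show the stabilizer is open one must exhibit the torsor as a discrete subset of some ambient topological space on which $G_K$ visibly acts continuously. The paper does exactly this, and it is where the work lives: it takes the projective closure $X$ of $Y$, observes that the finite flat coordinate $U$ is bounded on $X_{\Cp}|_{D_{\Cp}}$, pulls $U$ back along each $D_{i,\Cp}^\times \to Y_{\Cp}|_{D_{\Cp}^\times} \hookrightarrow X_{\Cp}|_{D_{\Cp}}$ to get an element $f(t) \in \Cp\langle t\rangle$ which is a root of the monic polynomial $U^n + b_1(t^{m_i})U^{n-1}+\cdots+b_n(t^{m_i})$ with $b_j \in K\langle t\rangle$, and then applies Lemma~\ref{intcl}: the integral closure of $K\langle t\rangle$ in $\Cp\langle t\rangle$ is $\bigcup_{L/K} L\langle t\rangle$. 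The proof of Lemma~\ref{intcl} is precisely the continuity argument you are implicitly relying on, carried out in the concrete space $\Cp\langle t\rangle$ where the finite set of roots is discrete and the $G_K$-action is manifestly continuous. Without something equivalent, your step "because $\Gamma$ is finite the action factors through a finite quotient" is unjustified.

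A secondary issue: once you know $\phi_{\Cp}$ is $G_L$-invariant, concluding that it is defined over $L$ is not an instance of ordinary finite Galois descent, since $\Cp/L$ is neither finite nor Galois in the classical sense. The paper uses \cite[Theorem A.2.4]{Con_MC} for this, and your "annulus by annulus" gluing would need an analogous input at each stage. This is a citable result and not a fatal gap, but it should be named rather than folded into the word "descent."
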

\begin{proof}
Let $X$ be the projective closure of $Y$ as above. Note that $X\setminus Y$ is a finite set. By Lemma \ref{finet}, after shrinking the radius of $D$, we may assume $Y|_{D^\times}=X|_{D^\times}$ and the map $\mu: Y|_{D^\times} \to D^\times$ is finite etale. Then, the $p$-adic Riemann existence theorem (see \cite[Proof of Corollary 2.9]{Lut} and \cite[\S 2.4]{Ram}) implies that, after shrinking the radius of $D$ further,
the restriction $Y_{\Cp}|_{D_{\Cp}^\times}$ is isomorphic to the disjoint union 
\[
\coprod_{i=1,\ldots,M} D_{i,\Cp}^\times 
\]
of punctured discs $D_{i,\Cp}^\times$ for closed discs $D_{i,\Cp}$ of some 
radii centered at the origin and, on the component $Y_{\Cp,i}^\times$ of 
$Y_{\Cp}|_{D_{\Cp}^\times}$ corresponding to $D_{i,\Cp}^\times$, the map 
$\mu_{\Cp}: Y^\times_{\Cp,i}\to D_{\Cp}^\times$ is identified with the finite 
flat surjection $D^\times_{i,\Cp} \to D^\times_{\Cp}$ defined by $z\mapsto 
x+z^{m_i}$ for some positive integer $m_i$. In particular, after replacing $K$ 
with a finite extension, we may assume that $D_{i,\Cp}$ is the base extension 
of a closed disc $D_i$ over $K$ with radius in $|K^\times|_p$. Then 
$D_{i,\Cp}^\times$ is also the base extension to $\Cp$ of the punctured disc 
$D_{i}^\times$ over $K$. It now remains to show that the maps $D_{i,\Cp}^\times 
\rightarrow Y_{\Cp}|_{D_{\Cp}^\times}$ are defined over a finite extension $L$ 
of $K$. To do this, we use the following lemma:

\begin{lem}\label{intcl}
The integral closure of the Tate algebra $K\langle t \rangle$ in $\Cp\langle t \rangle$ is
\[
\bigcup_{L/K} L\langle t \rangle,
\]
where the union is taken over the set of finite extensions $L/K$ in $\Cp$.
\end{lem}
\begin{proof}
Any element of the set of the lemma is integral over $K\langle t \rangle$. Conversely, let $f$ be a root in $\Cp\langle t \rangle$ of a monic polynomial over $K\langle t \rangle$. Then the absolute Galois group $\Gal(\bar{\bQ}_p/K)$ acts continuously on the finite set of roots in $\Cp\langle t \rangle$ of this polynomial, and thus the action factors through the quotient by a subgroup $\Gal(\bar{\bQ}_p/L)$ with some finite extension $L/K$. This yields $f\in \Cp\langle t \rangle^{\Gal(\bar{\bQ}_p/L)}=L\langle t \rangle$.
\end{proof}

As promised, we now show that the map $D_{i,\Cp}^\times \to 
Y_{\Cp}|_{D^\times_{\Cp}}$ is defined over a finite extension $L$ of $K$. 
Indeed, consider the composite
\[
D_{i,\Cp}^\times \to Y_{\Cp}|_{D^\times_{\Cp}}\to X_{\Cp}|_{D_{\Cp}}.
\]
Since $x\in \cW^0(K)$, we can write $D=\Spv(K\langle u \rangle)$ and $D_i=\Spv(K\langle t \rangle)$ so that the map $D^\times_{i,\Cp}\to D^\times_{\Cp}$ is given by $u\mapsto t^{m_i}$. Since the inclusion $D_{\Cp} \to \cW^0_{\Cp}$ is defined over $K$, we have
\[
X_{\Cp}|_{D_{\Cp}}=\Spv\left(\Cp\langle u \rangle [U]/(U^n+ b_1(u) U^{n-1} + \cdots + b_n(u)) \right)
\]
with the image $b_i(u)\in K\langle u \rangle$ of $a_i(w)$. Since $X_{\Cp}|_{D_{\Cp}}$ is affinoid, the function $U$ is bounded on $X_{\Cp}|_{D_{\Cp}}$. Thus the pull-back $f(t)\in \cO(D_{i,\Cp}^\times)$ of $U$ by the above composite is also bounded, and hence it is an element of the subring $\cO(D_{i,\Cp})=\Cp\langle t \rangle$. Since it is a root of the equation
\[
U^n+ b_1(t^{m_i}) U^{n-1} + \cdots + b_n(t^{m_i})=0,
\]
Lemma \ref{intcl} implies $f(t) \in L\langle t \rangle$ with some finite extension $L/K$. This means that the map $D_{i,\Cp}^\times \to Y_{\Cp}|_{D^\times_{\Cp}}$ is defined over $L$. Therefore, by \cite[Theorem A.2.4]{Con_MC}, we obtain an isomorphism
\[
\coprod_{i=1,\ldots,M} D_{i,L}^\times \to Y_L |_{D_L^\times}
\]
over $D_L^\times$ as in the proposition.
\end{proof}



\section{Eigencurves and their properness}

Let $\cD_N$ be the Coleman-Mazur-Buzzard eigencurve of tame level $N$ over the weight space $\cW$, which is constructed from the full Hecke algebra 
\[
\cH=\bZ_p[T_l\ (l\nmid pN),\ U_l\ (l\mid pN),\ \langle l\rangle_N\ (l\mid N)]
\] 
acting on the space of overconvergent modular forms of tame level $N$, via the eigenvariety machine \cite{Buz}. We denote by $\frX_{N}$ the subset of $\cD_N$ consisting of points corresponding to classical normalized eigenforms of level $\Gamma_1(Np^m)$ for some positive integer $m$ and of finite slope. We also denote by $\frX_N^0$ the subset of $\frX_N$ consisting of cuspidal eigenforms. By a similar argument to \cite[Proof of Proposition 3.5]{Che_JL}, we see that the subset $\frX_{N}$ is Zariski dense in $\cD_N$. 

We say an overconvergent modular form is cuspidal-overconvergent if the constant term of its Fourier expansion vanishes at any unramified cusp \cite[\S 3.6]{CM}.
By considering the action of $\cH$ on the space of cuspidal-overconvergent modular forms, we obtain the cuspidal eigencurve $\cD_N^0$. From \cite[Corollary 2.6]{Bel}, we see that $\frX_N^0$ is Zariski dense in $\cD^0_N$. We denote by $\frX_N^{\text{nc}}$ the subset of $\frX_N$ consisting of non-cuspidal-overconvergent forms.

For any $\alpha\in\cH$, we denote by $Z_{\alpha}$ the spectral curve for the operator $\alpha U_p$ \cite[\S 6.1]{CM}. Then the arguments in \cite[\S 7.3]{CM} are valid also for our case: we can define the isomorphism $\rho_{\beta,\alpha}$ in the proof of \cite[Proposition 7.3.5]{CM} over each admissible affinoid open subset of $\cW$ and glue them as in the proof of \cite[Lemma 5.6]{Buz}.
Thus, for any $\alpha\in 1+p\cH$, we have a finite map $\pi_{\alpha}:\cD_N \to Z_{\alpha}$. Note that, for any $\alpha \in 1+p\cH$ and any $x\in \cW$, every eigenvalue of $\alpha U_p$ appearing in the space of overconvergent modular forms of weight $x$ is an element of $\cO_{\Cp}$. This implies that the characteristic power series $P(T)$ of $\alpha U_p$ has coefficients with absolute values bounded by one, and on each connected component $\Spf(\bZ_p[[w]])^\rig$ of $\cW$, the power series $P(T)$ is an element of the ring $\bZ_p[[w]]\{\{T\}\}$.

We also have another construction of the nilreduction $\cD_{N,\red}$ of $\cD_N$ via Galois deformations, as follows. Let $\Sigma_{pN}$ be the set of places of $\bQ$ consisting of $\infty$ and the prime factors of $pN$. We denote by $G_{\bQ,\Sigma_{pN}}$ the Galois group of the maximal algebraic extension of $\bQ$ unramified outside $\Sigma_{pN}$. For any $p$-modular representation $\bar{V}$ of tame level $N$ \cite[\S 5.1]{CM}, we denote by $R_{\bar{V}}$ the universal deformation ring of the determinant of $\bar{V}$ \cite[Proposition 3.7]{Che_Psc} and put
\[
X_{\bar{V}}=\Spf(R_{\bar{V}})^\rig,\quad X_{N}=\coprod_{\bar{V}} X_{\bar{V}},
\]
where the disjoint union is taken over the set of isomorphism classes of such 
$\bar{V}$. By restricting the determinant of the universal pseudocharacter on 
the generic fiber to the inertia subgroup at $p$ and using local class field 
theory, we have a homomorphism $\bZ_p^\times \to \cO(X_N)^\times$. By twisting 
it as in \cite[\S 5.1]{CM}, we obtain a natural projection $X_N\to \cW$.

On the other hand, we have an injection 
\[
\frX_N \to X_N\times \Gm \times \prod_{l|N} \bA^1,\quad f\mapsto (\rho_f, a_p(f)^{-1}, (a_l(f))_l),
\]
where $\rho_f$ is the $p$-adic pseudocharacter of $G_{\bQ,\Sigma_{pN}}$ attached to $f$ and $a_l(f)$ is the eigenvalue of the Hecke operator $U_l$ acting on $f$. Then we denote by $\cC_N$ the Zariski closure of $\frX_N$ in the right-hand side. From the definition, the rigid analytic variety $\cC_N$ is reduced. Since we have a pseudocharacter of $G_{\bQ,\Sigma_{pN}}$ over $\cD_{N,\red}$ such that its specialization at $f$ is equal to $\rho_f$ for any $f\in \frX_N$, we have a natural map 
\[
\cD_{N,\red} \to X_N\times \Gm \times \prod_{l|N} \bA^1,
\]
which factors through $\cC_N$ by the Zariski density of $\frX_N\subseteq \cD_{N,\red}$. Then, by a similar argument to \cite[Proof of Lemma 2.4]{Che_Fern}, we see that the natural map
\[
\cD_{N,\red} \to X_N\times \cW\times \Gm \times \prod_{l|N} \bA^1
\]
is a closed immersion and the map $\cD_{N,\red} \to \cC_N$ is an isomorphism over $\cW$. 
By \cite[Lemma 5.8]{Buz}, $\cC_N$ is equidimensional of dimension one. Similarly, we have the cuspidal part $\cC_N^0\subseteq \cC_N$ obtained as the Zariski closure of the image of $\frX_N^0$ and an isomorphism $\cD_{N,\red}^0\simeq \cC_N^0$.

\begin{dfn}
Let $K/\bQ_p$ be an extension of complete valuation fields. Let $D_K$ be the closed unit disc over $K$ centered at the origin $O$ and put $D^\times_K=D_K\setminus \{O\}$. 
Let $\cX$ be a rigid analytic variety over $K$.
We say $\cX$ is Buzzard-Calegari proper at $K$-algebraic points if, for any finite extension $L/K$, any morphism $D_L^\times \to \cX_L$ over $L$ extends to a morphism $D_L \to \cX_L$.
\end{dfn}

Recall that an analytic subset of a rigid analytic variety $\cX$ is said to be discrete if it is equidimensional of dimension zero. Note that any analytic subset of a discrete subset of $\cX$ is also discrete, and an analytic subset of $\cX$ is discrete if and only if every irreducible component of it is discrete.

\begin{lem}\label{findisc}
Let $\varphi: \cX\to \cY$ be a finite morphism of rigid analytic varieties. Let 
$E\subseteq \cX$ and $F\subseteq \cY$ be discrete subsets. Then the analytic 
subsets $\varphi(E)$ and $\varphi^{-1}(F)$ are both discrete.
\end{lem}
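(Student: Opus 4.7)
The plan is to verify the two assertions separately, in each case checking that the subset in question is analytic and then that each irreducible component has dimension zero. Both facts will follow from standard properties of finite morphisms of rigid analytic varieties.

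For $\varphi^{-1}(F)$: since $F$ is analytic in $\cY$, it is locally cut out by a coherent ideal sheaf $\cJ\subseteq \cO_\cY$, so $\varphi^{-1}(F)$ is locally cut out by the coherent ideal $\varphi^{-1}(\cJ)\cO_\cX$ and is therefore an analytic subset of $\cX$. To see that every irreducible component has dimension zero, I would observe that the restriction $\varphi|_{\varphi^{-1}(F)}\colon \varphi^{-1}(F)\to F$ is itself finite (finiteness is stable under base change). Because finite morphisms have zero-dimensional fibers and preserve local Krull dimension, we obtain $\dim_x\varphi^{-1}(F)=\dim_{\varphi(x)}F=0$ for every $x$, so $\varphi^{-1}(F)$ is discrete.

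For $\varphi(E)$: the key point is that the image of an analytic subset under a finite morphism is analytic. If $E$ is locally defined by a coherent ideal $\cI\subseteq\cO_\cX$, then $\varphi_*(\cO_\cX/\cI)$ is a coherent $\cO_\cY$-module (coherence is preserved by finite direct images), and its annihilator $\Ann(\varphi_*(\cO_\cX/\cI))\subseteq \cO_\cY$ is a coherent ideal that locally cuts out an analytic subset with the same support as $\varphi(E)$. The induced map $E\to\varphi(E)$ is then a finite surjection, so by the same dimension-preservation argument as above we get $\dim_y\varphi(E)=0$ for every $y$, whence $\varphi(E)$ is discrete.

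The main (very mild) obstacle is simply invoking the correct foundational statements in the rigid analytic category: coherence of direct images under finite morphisms, the fact that images of analytic subsets under such morphisms remain analytic, and the equality of local Krull dimension under finite extensions. All of these are standard and cause no real trouble, so the proof should be short.
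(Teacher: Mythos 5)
Your proof takes essentially the same approach as the paper: pass to a finite map between the relevant (reduced) analytic subsets, express it locally as a finite ring map, and use the fact that finite extensions preserve Krull dimension to conclude discreteness. The paper's proof handles $\varphi(E)$ first via a finite injection $A\hookrightarrow B$ of affinoid algebras, then observes $\varphi(\varphi^{-1}(F))\subseteq F$ is discrete and applies the same reasoning; you run the two cases in the opposite order with the same underlying mechanism, and you spell out the analyticity of $\varphi(E)$ via $\Ann(\varphi_*(\cO_\cX/\cI))$, which the paper takes for granted from the lemma's statement. One small imprecision worth flagging: ``finite morphisms preserve local Krull dimension'' is false in general (a closed point mapping into a curve is a finite morphism), so the asserted equality $\dim_x\varphi^{-1}(F)=\dim_{\varphi(x)}F$ is not a consequence of finiteness alone; what you actually need and have is the inequality $\dim\varphi^{-1}(F)\leq\dim F$ for a finite morphism (and the equality $\dim E=\dim\varphi(E)$ when the ring map is a finite \emph{injection}, as for a surjection of reduced spaces). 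Since $F$ has dimension zero the inequality suffices, so the conclusion is unaffected.
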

\begin{proof}
We give $E$, $F$, $\varphi(E)$ and $\varphi^{-1}(F)$ the reduced structures. The induced map $\varphi_E:E \to \varphi(E)$ is finite surjective and both sides are reduced. For any admissible affinoid open subset $V=\Spv(A)$ of $\varphi(E)$, we denote by $B$ the affinoid ring of $\varphi^{-1}_E(V)$. Over $V$, the map $\varphi_E$ is given by a finite injection $A\to B$. Since $B$ is of dimension zero, so is $A$ and $\varphi(E)$ is discrete. Since $\varphi(\varphi^{-1}(F))\subseteq F$ is discrete, the same reasoning implies that $\varphi^{-1}(F)$ is discrete.
\end{proof}

Let $F$ be any discrete subset of $D^\times_K$. In the sequel, we will need to take a special type of covering of $D^\times_K$ separating the points in $F$, as follows. Take an admissible covering $D^\times_{K}=\bigcup_{j\in \bZ_{\geq 0}} U_{j}$ consisting of
\[
{U}_{j}=\{ y\in D_{K} \mid \rho'_{j} \leq|y|_p \leq \rho_{j}\} 
\]
with some positive real numbers $\rho_{j},\rho'_{j} \in |K^\times|_p$ satisfying 
\[
\rho'_{j} <\rho_{j+1} <\rho'_{j-1} <\rho_{j}
\]
for any $j$. Since $U_j$ is affinoid and $F$ is discrete, the set $F\cap U_j$ is finite and for some finite Galois extension $K_j/K$, every element of 
\[
(F\cap U_j)_{K_j}=\{y_{j,1},\ldots, y_{j,n_{j}}\}
\] 
is $K_j$-rational. For some sufficiently small positive real number $r_{j}\in |K_{j}^\times|_p$, the closed discs 
\[
D_{j,k}=\{y\in D_{K_{j}}\mid |y-y_{j,k}|_p \leq r_{j}\}
\]
are admissible open subsets of $U_{j, K_{j}}$ which are disjoint to each other.
For any positive real number $s_{j}\in |K_{j}^\times|_p$ satisfying $s_{j}<r_{j}$, put 
\[
V_{j}=\{ y\in {U}_{j,K_j}\mid |y-y_{j,k}|_p\geq s_{j}\text{ for any }k\},
\]
which is an admissible affinoid open subset of $D_{K_j}$.
Then
\[
{U}_{j,K_j}=\bigcup_{k=1,\ldots, n_{j}} {D}_{j,k} \cup V_{j}
\] 
is an admissible covering. We will refer to any covering of this type as a covering of $D^\times_K$ adapted to the discrete subset $F$.

\begin{lem}\label{PPdiscr}
Let $K/\bQ_p$ be an extension of complete valuation fields and $\cX$ a separated rigid analytic variety over $K$. Then $\cX$ is Buzzard-Calegari proper at $K$-algebraic points if and only if for any finite extension $L/K$ and any discrete subset $F$ of $D^\times_L$, any morphism $\varphi: D_L^\times\setminus F\to \cX_L$ over $L$ extends to a morphism $D_L \to \cX_L$.
\end{lem}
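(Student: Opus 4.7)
The ``only if'' direction is immediate, obtained by taking $F=\emptyset$. For the converse, fix a finite extension $L/K$, a discrete subset $F\subseteq D_L^\times$, and a morphism $\varphi: D_L^\times\setminus F\to \cX_L$. The plan is to first extend $\varphi$ to a morphism $\tilde\varphi: D_L^\times\to \cX_L$ by filling in each point of $F$ one neighborhood at a time, and then to apply the Buzzard-Calegari hypothesis once more to extend across the origin $O$.

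Apply the adapted covering construction recalled before the lemma, with $K$ replaced by $L$: write $D_L^\times=\bigcup_j U_j$ as an admissible union of annuli, and for each $j$ choose a finite Galois extension $K_j/L$ over which the points of $F\cap U_j$ become rational, say $y_{j,1},\ldots,y_{j,n_j}$. Shrinking $r_j$ if necessary, arrange that the closed discs $D_{j,k}\subseteq U_{j,K_j}$ of radius $r_j$ around the $y_{j,k}$ are pairwise disjoint, so that each $D_{j,k}$ meets $F_{K_j}$ only at $y_{j,k}$. Since $r_j\in |K_j^\times|_p$, an affine change of variable identifies $D_{j,k}$ with the closed unit disc $D_{K_j}$, sending $y_{j,k}$ to the origin. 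Under this identification the restriction of $\varphi_{K_j}$ to $D_{j,k}\setminus\{y_{j,k}\}$ becomes a morphism $D_{K_j}^\times\to \cX_{K_j}$; by the Buzzard-Calegari hypothesis applied to the finite extension $K_j/K$, it extends to a morphism $\tilde\varphi_{j,k}: D_{j,k}\to \cX_{K_j}$.

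Because $\cX_{K_j}$ is separated and $D_{j,k}\setminus\{y_{j,k}\}$ is Zariski dense in $D_{j,k}$, each extension $\tilde\varphi_{j,k}$ is unique. The local extensions therefore glue with $\varphi_{K_j}|_{U_{j,K_j}\setminus F_{K_j}}$ to a single morphism $\hat\varphi_j: U_{j,K_j}\to \cX_{K_j}$, and by the same uniqueness principle $\hat\varphi_j$ is $\Gal(K_j/L)$-equivariant, hence descends to an $L$-morphism $\tilde\varphi_j: U_j\to \cX_L$. The $\tilde\varphi_j$ agree on overlaps, since they all restrict to $\varphi$ on the Zariski dense open subset $(U_j\cap U_{j'})\setminus F$, and so by separatedness of $\cX_L$ they glue along the admissible covering $\{U_j\}$ to a morphism $\tilde\varphi: D_L^\times\to \cX_L$. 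Finally, the Buzzard-Calegari hypothesis applied to $\tilde\varphi$ with the finite extension $L/K$ yields the extension $D_L\to \cX_L$. The main obstacle is the Galois descent from $K_j$ to $L$, which is made possible precisely by the uniqueness of extensions of morphisms into separated rigid spaces.
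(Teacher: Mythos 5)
Your proof is correct and follows essentially the same route as the paper's: adapted annulus covering, Buzzard--Calegari applied locally around each point of $F$ after a change of variable, gluing via uniqueness of extensions into a separated space, Galois descent to $L$, and one final application of Buzzard--Calegari across the origin. One small slip: the direction you call ``only if'' (taking $F=\emptyset$) is actually the ``if'' direction, and what you call ``the converse'' is the ``only if'' direction; the mathematics is unaffected.
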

\begin{proof}
Suppose that $\cX$ is Buzzard-Calegari proper at $K$-algebraic points. We may assume $L=K$. Take a covering of $D^\times_K$ adapted to the discrete subset $F$ as above.
By assumption, each morphism
\[
D_{j,k}^\times =D_{j,k}\setminus\{y_{j,k}\}\to \cX_{K_j}
\]
induced by $\varphi$ extends to a morphism ${D}_{j,k} \to {\cX}_{K_{j}}$. By gluing, we obtain a morphism $\bar{\varphi}_j:U_{j,K_j}\to \cX$.

For any element $\sigma\in \Gal(K_j/K)$, consider the induced map $\sigma^*: U_{j,K_j}\to U_{j,K_j}$. Since the restriction of $\bar{\varphi}_j$ to the admissible open subset $U_{j,K_j}\setminus \{y_{j,1},\ldots,y_{j,n_j}\}$ is the base extension of $\varphi|_{U_j\setminus F\cap U_j}$ to $K_j$, we have 
$\bar{\varphi}_j\circ \sigma^*=\bar{\varphi}_j$
on this subset. Since this subset is Zariski dense and $\cX$ is separated, the equality $\bar{\varphi}_j\circ \sigma^*=\bar{\varphi}_j$ holds on $U_{j,K_j}$. By the descent of morphisms \cite[Corollary 4.2.5]{Con_ample}, we obtain morphisms $U_{j}\to \cX$. Since they coincide with each other on a Zariski dense subset of $U_{j}\cap U_{j+1}$ and $\cX$ is separated, they glue to define a morphism $D^\times_{K} \to \cX$. Again by assumption, it extends to a morphism $D_{K} \to \cX$. 
\end{proof}

\begin{thm}{(\cite[Theorem 1.1]{DL})}\label{PPness}
The eigencurve $\cC_{N}$ is Buzzard-Calegari proper at $\bQ_p$-algebraic points. 
\end{thm}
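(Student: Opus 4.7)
Since this theorem is the main result of Diao--Liu \cite{DL}, any meaningful plan is really an outline of their strategy. Fix a finite extension $L/\bQ_p$ and a morphism $\varphi\colon D_L^\times\to \cC_{N,L}$; the task is to extend $\varphi$ to $D_L$. The plan is to use the closed embedding
\[
\cD_{N,\red}\simeq\cC_N\hookrightarrow X_N\times \cW\times \Gm\times \prod_{l\mid N}\bA^1
\]
recorded in this section, extend $\varphi$ coordinate by coordinate, and finally use separatedness of $\cC_N$ together with the Zariski density of $D_L^\times\subseteq D_L$ to ensure that the extended morphism lands back in $\cC_{N,L}$.

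First, the weight composition $D_L^\times\to \cW_L$ extends automatically: $\cW$ is a disjoint union of open unit discs over $\bQ_p$, and a bounded rigid analytic function on $D_L^\times$ extends uniquely to $D_L$, with the bound in the value group propagating to the centre of the disc. For the $X_N$-component, pulling back the universal pseudocharacter of $G_{\bQ,\Sigma_{pN}}$ produces a continuous pseudocharacter over $\cO(D_L^\times)$ whose specializations at the Zariski dense set of classical points land in $\cO_{\Cp}$; the finitely many coefficient functions defining the pseudocharacter are therefore $p$-adically bounded on $D_L^\times$ and extend to $D_L$. This simultaneously extends the Hecke eigenvalues $T_l$ for $l\nmid pN$, which are encoded in the pseudocharacter.

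Next, for the $\Gm$-component I would pass to the spectral curve via the finite map $\pi_1\colon \cC_N\to Z_1$ attached to $U_p$ that was constructed above. Over any admissible affinoid neighborhood $V\subseteq \cW$ of the image weight of the puncture, Buzzard's slope decomposition exhibits $Z_1|_V=Z_V^{\leq s}\sqcup Z_V^{>s}$ for each rational $s\geq 0$ as a decomposition into admissible clopen subsets, with $Z_V^{\leq s}$ finite over $V$ and $\bigcap_{s} Z_V^{>s}=\emptyset$. Since $D_L^\times$ is connected, the image of $\pi_1\circ\varphi$ must lie in some single $Z_V^{\leq s_0}$; the finiteness of $Z_V^{\leq s_0}\to V$ combined with the already-extended weight map then yields the extension of the $\Gm$-coordinate across the puncture.

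The main obstacle will be the $\bA^1$-components corresponding to $U_l$ for $l\mid N$, which a priori take unbounded values on $D_L^\times$ and need not be controlled by the spectral curve for $U_p$. This is the technical heart of \cite{DL} and is resolved through local-global compatibility at $l$: the restriction of the extended family of pseudocharacters to a decomposition group at $l$, combined with local Langlands for $\mathit{GL}_2(\bQ_l)$, determines $a_l$ up to data that has already been extended and, crucially, bounds it. Once every coordinate has been extended, the resulting morphism into the ambient product exists, and separatedness of $\cC_{N,L}$ together with Zariski density of $D_L^\times$ in $D_L$ forces it to factor through $\cC_{N,L}$, completing the extension.
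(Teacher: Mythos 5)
This statement is cited verbatim from Diao--Liu \cite{DL}; the paper under review does not reprove it, so what you have written is necessarily a summary of their paper rather than a proof within this one. As a summary it contains a genuine and central gap.

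Your step for the $\Gm$-coordinate does not work as stated. For a fixed rational $s$, it is true that $Z_1|_V = Z_V^{\leq s}\sqcup Z_V^{>s}$ is an admissible clopen decomposition, so the connected set $\pi_1(\varphi(D_L^\times))$ must lie entirely in one of the two pieces. But it does \emph{not} follow that there is a single $s_0$ with the image inside $Z_V^{\leq s_0}$: the image could a priori lie in $Z_V^{>s}$ for every $s$, which happens exactly when the $U_p$-slope is unbounded on $D_L^\times$ as the puncture is approached. The condition $\bigcap_s Z_V^{>s}=\emptyset$ only tells you that each single point of the image has finite slope, not that the slopes over the whole punctured disc are uniformly bounded. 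Ruling out unbounded slope near the puncture is not a formal consequence of slope decompositions; it is precisely the theorem of Diao--Liu, and their actual argument is Galois-theoretic, using the family of $(\varphi,\Gamma)$-modules attached to the pseudocharacter over $D_L^\times$, its triangulation at classical points, and continuity results for families of trianguline representations to control the crystalline period (and hence $a_p$) near the puncture. The slope-decomposition reasoning you propose would, if valid, give a short elementary proof, which is a sign it has skipped the essential point.

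You also have the difficulty located in the wrong coordinate. Once the weight and $\Gm$ (i.e.\ $a_p^{-1}$) coordinates have been extended across the puncture, the $\bA^1$-coordinates for $U_l$ with $l\mid N$ are automatic: as recorded in this paper, $\cC_N$ is finite over $\cW\times\Gm$, so those coordinates are integral (in particular bounded) over the ring of functions on $D_L$, and no appeal to local Langlands or local--global compatibility at $l$ is needed. Similarly the weight coordinate extends for elementary reasons (as noted in the remark following the theorem, with reference to \cite[Lemma 2.1]{DL}), and the pseudocharacter coordinates extend because their matrix coefficients take values in $\cO_{\Cp}$ on a Zariski-dense set of classical points and are hence bounded. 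The entire content of the theorem is therefore concentrated in the step you treated as routine.
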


\begin{rmk}
Theorem \ref{PPness} is equivalent to the following statement: Suppose that we have a commutative diagram 
\[
\xymatrix{
D_K^\times \ar[r]\ar[d] & \cC_{N,K} \ar[d]\\
D_K \ar[r]& \cW_K
}
\]
for some finite extension $K/\bQ_p$, where $D^\times_K \to D_K$ is the natural inclusion and $\cC_{N,K} \to \cW_K$ is the weight map. Then there exists a morphism $D_K \to \cC_{N,K}$ which makes the diagram with this map added also commutative. This equivalence follows from the fact that the composite $D^\times_K \to \cC_{N,K}\to \cW_K$ automatically extends to $D_K$
(see for example \cite[Lemma 2.1]{DL}). 
\end{rmk}

\begin{rmk}
Originally, Buzzard-Calegari named the above property in the case of $K=\Cp$ the \textit{properness} of the eigencurve $\cC_{N,\Cp}$, and proved the case of $N=1$ and $p=2$ \cite[Theorem 8.1]{BuzCal}. Theorem \ref{PPness} follows from the conjectural properness of $\cC_{N,\Cp}$ in this sense. However, the authors do not know if they are equivalent.
\end{rmk}



\section{Old and new components}

Let $q$ be a prime dividing $N$. Put $N=N_0 q^m$ with $q\nmid N_0$ and $M=q^{-1}N$. We denote by $\frX_N^{q\text{-old}}$ the subset of $\frX_N$ consisting of classical normalized cuspidal eigenforms which are $q$-old. In this section, first we investigate a relation between irreducible components of $\cC_N$ and those of $\cC_M$. 

\begin{lem}\label{fincomp}
Let $\varphi:\cX\to \cY$ be any finite morphism of rigid analytic varieties. 
\begin{enumerate}
\item\label{fincompt} Let $C$ be any irreducible component of $\cY$ such that $\varphi^{-1}(C)\to C$ is surjective. Suppose that $\cY$ is reduced. Then there exists an irreducible component $C'$ of $\cX$ satisfying $\varphi(C')=C$. Moreover, if $C$ is equidimensional of dimension $d$, then so is $C'$. 
\item\label{fincomps} Let $C$ be any irreducible component of $\cX$. Suppose that $\cY$ and $C$ are both equidimensional of dimension $d$. Then the image $\varphi(C)$ is an irreducible component of $\cY$.
\end{enumerate}
\end{lem}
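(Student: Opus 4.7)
The plan is to reduce both parts to the general theory of irreducible decompositions of rigid analytic varieties, combined with the fact that a finite morphism is proper (so images of closed analytic subsets are closed analytic subsets) and preserves dimension on any irreducible piece that maps surjectively.

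For part (\ref{fincompt}), the first step is to decompose the closed analytic subset $\varphi^{-1}(C)\subseteq \cX$ into its locally finite family of irreducible components $\{C'_j\}_{j\in J}$. Since $\varphi$ is finite, each image $\varphi(C'_j)$ is an irreducible closed analytic subset of $C$, and the surjectivity hypothesis yields the decomposition $C=\bigcup_{j\in J}\varphi(C'_j)$. The key step is then to invoke the basic fact that an irreducible rigid analytic variety cannot be exhausted by a locally finite family of proper closed analytic subsets; applied to the irreducible $C$, this forces $\varphi(C'_{j_0})=C$ for some $j_0\in J$. Taking $C'$ to be any irreducible component of $\cX$ containing $C'_{j_0}$, the image $\varphi(C')$ is an irreducible closed analytic subset of $\cY$ containing the irreducible component $C$, so the maximality of $C$ among irreducible analytic subsets forces $\varphi(C')=C$. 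The moreover clause then follows because the restriction $\varphi|_{C'}\colon C'\to C$ is finite and surjective, hence preserves dimension.

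For part (\ref{fincomps}), since $\varphi$ is finite the image $\varphi(C)$ is an irreducible closed analytic subset of $\cY$, and the restricted map $C\to\varphi(C)$ is finite and surjective, so $\dim\varphi(C)=\dim C=d$. Let $C''$ be any irreducible component of $\cY$ containing $\varphi(C)$. Equidimensionality of $\cY$ gives $\dim C''=d$, and any irreducible closed analytic subset of full dimension inside an irreducible analytic variety must coincide with the whole variety, yielding $\varphi(C)=C''$.

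The only step I expect to require real care is the Baire-type argument used in part (\ref{fincompt}): although intuitively clear, the assertion that an irreducible rigid analytic variety cannot be covered by a locally finite family of proper closed analytic subsets rests on the foundational theory of irreducible components of rigid analytic varieties, for which I would cite the relevant result from Conrad's work rather than reprove it from scratch.
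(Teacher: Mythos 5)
Your proposal is correct and follows essentially the same route as the paper: the ``Baire-type'' fact you invoke is exactly what the paper supplies by pulling back a non-empty affinoid open $U\subseteq C$ and observing that the quasi-compact set $\varphi^{-1}(U)$ meets only finitely many irreducible components of $\cX$, so that $C$ lies in a finite union of images $\varphi(X_i)$ and irreducibility picks out one with $\varphi(X_i)=C$. The only point to tighten is the moreover clause, where ``preserves dimension'' gives $\dim C'=d$ but equidimensionality of $C'$ additionally uses the fact (Conrad, p.~496, cited in the paper) that $\dim\cO_{C',x}$ is constant on the irreducible space $C'$.
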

\begin{proof}
Let us prove (\ref{fincompt}). By \cite[Lemma 2.2.6]{Con_irr}, we can find a non-empty admissible open subset $U$ of $\cY$ which is contained in $C$. By \cite[Lemma 2.2.3]{Con_irr}, the Zariski closure of $U$ is $C$. We may assume that $U$ is affinoid. Since $\varphi^{-1}(U)$ is non-empty and quasi-compact, \cite[Paragraph below Definition 2.2.2]{Con_irr} implies that there exist finitely many irreducible components $X_1,\cdots,X_r$ of $\cX$ satisfying $\varphi^{-1}(U) \subseteq \bigcup_{i=1}^r X_i$. Thus we obtain $U\subseteq \bigcup_{i=1}^r \varphi(X_i)$ and $C\subseteq \bigcup_{i=1}^r \varphi(X_i)$, where every $\varphi(X_i)$ is an irreducible analytic subset of $\cY$. Since $C$ is irreducible, we have $C\subseteq \varphi(X_i)$ for some $i$ and \cite[Lemma 2.2.3]{Con_irr} implies $C=\varphi(X_i)$. 

Suppose moreover that $C$ is equidimensional of dimension $d$. We consider on $C$ and $C'=X_i$ the reduced structures. Then the induced map $\varphi: C'\to C$ is finite surjective and, over each admissible affinoid open subset $V\subset C$, the map $\varphi$ is defined by a finite injection of affinoid algebras $\cO(V)\to \cO(\varphi^{-1}(V))$. Thus the latter ring is of dimension $d$. Since $\dim(\cO_{C',x})$ is the same for any $x\in C'$ \cite[p. 496]{Con_irr}, we see that $C'$ is equidimensional of dimension $d$.

As for (\ref{fincomps}), we have a finite surjection $C\to \varphi(C)$, where we give both sides the reduced structures. As in the last part of the proof of (\ref{fincompt}), we see that $\varphi(C)$ is an irreducible analytic subset which is equidimensional of dimension $d$. Thus \cite[Corollary 2.2.7]{Con_irr} implies that $\varphi(C)$ is an irreducible component of $\cY$.
\end{proof}

\begin{dfn}
Let $C$ be any irreducible component of the eigencurve $\cC_N$ and $\kappa:C\to \cW$ the weight map. 
\begin{enumerate}
\item We say $C$ is of finite degree if $\deg(\kappa^{-1}(x))$ is finite for any $x\in \cW$. Otherwise, we say $C$ is of infinite degree.
\item We say $C$ is Eisenstein if $C\cap \frX_N^{\text{nc}}$ is Zariski dense in $C$. Otherwise, we say $C$ is cuspidal.
\item Suppose that $C$ is a cuspidal component. We say $C$ is $q$-old if $C\cap \frX_N^{q\text{-old}}$ is Zariski dense in $C$, and $C$ is $N$-new if it is not $q$-old for any prime $q\mid N$.
\end{enumerate}
\end{dfn}

Let $C\to Y$ be a finite surjection to a Fredholm hypersurface over a component $\cW^0$ of $\cW$. Then $C$ is of finite degree if and only if so is $Y$. Indeed, if $Y$ is of infinite degree, then there exists $x\in \cW^0$ such that the fiber of $Y\to \cW$ at $x$ is an infinite set, which implies that $C$ is not of finite degree. In particular, if $C$ is of infinite degree, then some fiber of $\kappa: C\to \cW$ is an infinite set.

If $C$ is Eisenstein, then as in \cite[\S3.6]{CM} we see that $C$ is in the ordinary locus of $\cC_N$ and hence it is finite over $\cW$. If $C$ is cuspidal, then it is contained in the cuspidal eigencurve $\cC_N^0$ and thus $C\cap \frX^0_N$ is Zariski dense in $C$. Since the Galois representation attached to any Eisenstein series is reducible, \cite[\S4.2]{Che_Psc} implies that for any cuspidal component $C$, the Zariski closure of $C\cap (\frX_N\setminus \frX_N^0)$ is a discrete subset of $C$.

\begin{lem}\label{qTU}
Let $C$ be any irreducible component of $\cC_N$ which is $q$-old. Then there exist an irreducible component $C'$ of $\cC_M$ and finite surjections over $\cW$
\[
C' \gets C''\to C
\]
from a rigid analytic variety $C''$.
\end{lem}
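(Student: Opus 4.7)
The plan is to interpolate the classical $q$-stabilization/degeneracy picture: each $g \in \frX_N^{q\text{-old}}$ arises from an eigenform $f$ of level $M$ via a degeneracy map, with $a_l(g) = a_l(f)$ for every prime $l \neq q$ and with $U_q(g)$ a root of the characteristic polynomial of $U_q$ acting on the level-$N$ old subspace attached to $f$. Accordingly, I would realize $C''$ as an irreducible component of a finite cover of $\cC_M$ mapping onto $C$ via a global version of this correspondence.

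The first step is to construct a finite rigid analytic morphism $\Psi \colon \tilde{\cC}_M \to \cC_N$ over $\cW$, where $\tilde{\cC}_M \to \cC_M$ is the finite cover of $\cC_M$ obtained by adjoining a root of the relevant $q$-stabilization polynomial (of degree two when $m = 1$ and degree one when $m \geq 2$). Using that $\cC_N$ sits as a closed reduced subvariety of $X_N \times \Gm \times \prod_{l \mid N} \bA^1$, one defines $\Psi$ coordinate by coordinate by transporting the Hecke eigenvalues and the Galois pseudocharacter on $\cC_M$ through the stabilization formulas; that the image lies in $\cC_N$ follows from the Zariski density of classical points in $\tilde{\cC}_M$ (inherited from their density in $\cC_M$), since at each such point the formulas yield a genuine level-$N$ eigensystem. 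By construction, $\Psi(\tilde{\cC}_M)$ is the Zariski closure in $\cC_N$ of $\frX_N^{q\text{-old}}$, and hence contains $C$ by the $q$-oldness hypothesis.

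Given $\Psi$, Lemma~\ref{fincomp}(1) yields an irreducible component $C''$ of $\Psi^{-1}(C)$, equidimensional of dimension one, with $\Psi(C'') = C$; the restriction of $\Psi$ to $C''$ is then a finite surjection $C'' \to C$ over $\cW$. Composing the inclusion $C'' \hookrightarrow \tilde{\cC}_M$ with the finite projection $\tilde{\cC}_M \to \cC_M$ gives a finite morphism $C'' \to \cC_M$, whose image $C'$ is an irreducible component of $\cC_M$ by Lemma~\ref{fincomp}(2); this produces the desired finite surjection $C'' \to C'$ over $\cW$.

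The principal obstacle is the clean construction of $\Psi$, especially in the case $m \geq 2$ where both $f$ and $g$ already have level divisible by $q$ and the classical ``stabilization'' picture is more subtle and must be described case by case. A related technical point is verifying that $\Psi$ is genuinely finite (and not merely quasi-finite onto its image), which, given the finiteness of $\tilde{\cC}_M \to \cC_M$, should reduce to a standard boundedness statement for the $U_p$-slope along $\Psi$.
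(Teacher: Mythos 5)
Your overall strategy matches the paper's: build a finite cover $\tilde{\cC}_M \to \cC_M$ encoding the $q$-stabilization, map it finitely to $\cC_N$ using the coordinates $X_N \times \Gm \times \prod_{l\mid N}\bA^1$, observe that the classical $q$-old locus is hit, and run Lemma \ref{fincomp}(1) followed by Lemma \ref{fincomp}(2). Two of the points you flag as obstacles are not actually difficulties, and one factual claim is off.

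First, the stabilization polynomial when $q\mid M$ (i.e.\ $m\geq 2$) is \emph{not} of degree one. The paper takes $\tilde{\cC}_M \subseteq \cC_M\times\bA^1$ cut out by $U^2 - U_q U$ when $q\mid M$, and by $U^2 - T_q U + q^{-1}\kappa(q)\langle q\rangle_M$ when $q\nmid M$ (i.e.\ $m=1$). Both are quadratic. The point for $q\mid M$ is that a level-$M$ eigenform $f$ gives rise to a two-dimensional $q$-old space at level $N$ on which $U_q$ has eigenvalues $a_q(f)$ and $0$, and both branches genuinely occur among the $q$-oldforms; a degree-one cover adjoining only $U=U_q$ (or only $U=0$) would not account for both, so the construction could fail to dominate $C$ if $C$ lives on the other branch.

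Second, the finiteness of $\Psi$ is not something that needs a boundedness-of-slope argument. Since $\cC_M$ and $\cC_N$ are each finite over $\cW\times\Gm$, $\tilde{\cC}_M$ is finite over $\cW\times\Gm$ as well, and the morphism to $\cC_N$ lies over $\cW\times\Gm$ (the $\Gm$-coordinate $a_p^{-1}$ is carried over untouched). A morphism over a separated base from a finite cover of that base to another finite cover is automatically finite, so finiteness is immediate. Finally, the paper is slightly cleaner in that it restricts the source to $\tilde{C}$, the Zariski closure of $\tilde{\frX}_M^0 = (\pi')^{-1}(\frX_M^0)$ in $\tilde{\cC}_M$, rather than working with all of $\tilde{\cC}_M$; this sidesteps any worry about non-cuspidal pieces, and the identity $(\pi\circ j)(\tilde{\frX}_M^0)=\frX_N^{q\text{-old}}$, coming from \cite[Corollary 4.6.20]{Miy}, then directly gives $C\subseteq \pi(\tilde{C})$.
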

\begin{proof}
Let $\kappa(q)$ be the image of $q\in \bZ_p^\times$ by the restriction of the universal character
\[
\bZ_p^\times \to \cO(\cW)^\times \to \cO(\cC_M)^\times.
\]
We denote by $\tilde{\cC}_M$ the closed subvariety of $\cC_M\times \bA^1$ defined by 
\[
\left\{
\begin{array}{ll}
U^2-T_q U+ q^{-1}\kappa(q)\langle q\rangle_M & (q\nmid M) \\
U^2-U_q U & (q\mid M),
\end{array}
\right.
\]
where $U$ is the parameter of $\bA^1$. The first projection $\pi':\tilde{\cC}_M\to \cC_M$ is finite flat. Put $\tilde{\frX}_M^0=(\pi')^{-1}(\frX_M^0)$.

Consider the diagram
\[
\xymatrix{
\tilde{\cC}_M\ar[r] &\cC_{M}\times \bA^1 \ar[r]^-j & X_M \times \Gm \times \prod_{l|N_0} \bA^1 \times \bA^1 \ar[d]^{\pi} \\
 \cC_{N} \ar[rr]& & X_N \times \Gm \times \prod_{l|N_0} \bA^1 \times \bA^1,
}
\]
where the last $\bA^1$'s on the rightmost entries are the $q$-parts, the map $j$ is given by
\[
j(f,u)=(\rho_f,a_p(f)^{-1}, (a_l(f))_{l\mid N_0}, u),
\]
the other horizontal arrows are the natural closed immersions and the map $\pi$ is induced by the natural map $G_{\bQ,\Sigma_{pN}}\to G_{\bQ, \Sigma_{pM}}$. Note that the morphism $X_M\to X_N$ is over $\cW$.

From \cite[Corollary 4.6.20]{Miy} we see that, for any element $g\in \frX_M^0$, the subset of $f\in\frX_N^{q\text{-old}}$ satisfying $a_l(f)=a_l(g)$ for any prime $l\neq q$ is in bijection with the fiber $(\pi')^{-1}(g)$ via the map $f\mapsto a_q(f)$. 
Moreover, every element of $\frX_N^{q\text{-old}}$ is obtained in this way from some $g\in \frX_M^0$. This means $(\pi\circ j)(\tilde{\frX}^0_M)= \frX_N^{q\text{-old}}$. Let $\tilde{C}$ be the Zariski closure of $\tilde{\frX}^0_M$ in $\tilde{\cC}_M$. 
We give $C$ and $\tilde{C}$ the reduced structures. Then $\pi$ induces a morphism $\tilde{C}\to \cC_N$, which we also denote by $\pi$. Since $\cC_M$ and $\cC_N$ are finite over $\cW\times \Gm$, the morphism $\pi:\tilde{C}\to \cC_N$ is also finite and, by the assumption that $C$ is $q$-old, the map $\pi^{-1}(C)\to C$ is surjective.

Now Lemma \ref{fincomp} (\ref{fincompt}) shows that there exists an irreducible component $C''$ of $\tilde{C}$ which is equidimensional of dimension one satisfying $\pi(C'')=C$. From Lemma \ref{fincomp} (\ref{fincomps}), we see that $C'=\pi'(C'')$ is an irreducible component of $\cC_M$. This concludes the proof.
\end{proof}

For $N$-new components, we show the following density result of $N$-newforms. 

\begin{lem}\label{qnewdense}
Let $C$ be any irreducible component of $\cC_N$ which is $N$-new. Let $f$ be an element of $C$ of integral weight $k$, level $\Gamma_1(Np)$ and slope $s$. Then, for any sufficiently large integer $M$, there exists a classical $N$-new eigenform $g\in C\cap \frX_N$ of integral weight $k'> 2s+1$, level $\Gamma_1(Np)$ and slope $s$ satisfying 
$k\equiv k' \bmod (p-1)p^M$.
\end{lem}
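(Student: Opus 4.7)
The plan is to produce $g$ as a preimage of a carefully chosen integer weight $k'$ lying in a small $p$-adic neighborhood of $k$ in $\cW$, using two structural facts about the eigencurve. First, since $a_p$ is a global unit on $\cC_N$ (taking values in $\Gm$), the function $v_p(a_p)$ is locally constant on $\cC_N$: at any point where $a_p$ does not vanish, the absolute value of $a_p$ is locally constant by continuity, so the slope is identically $s$ on some open neighborhood of $f$ in $C$. Second, because $C$ is $N$-new, each of the following subsets of $C$ is discrete: for every prime $q \mid N$, the Zariski closure of $C \cap \frX_N^{q\text{-old}}$ is a proper analytic subset of the one-dimensional $C$, hence discrete; and as remarked earlier in the paper, the Zariski closure of $C \cap (\frX_N \setminus \frX_N^0)$ on a cuspidal component is also discrete. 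Let $Z \subset C$ denote the union of these finitely many discrete subsets.

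Next I would choose a small affinoid disk $U \subseteq \cW$ centered at $k$ together with a rational $s^+$ strictly larger than $s$ and strictly smaller than the next slope appearing at weight $k$, so that the slope-$\leq s^+$ part of the eigencurve $(\cC_N)_{\leq s^+}|_U$ is finite over $U$ (the standard slope decomposition input to the eigenvariety machine, pulled back from the spectral curve via the finite map $\pi_\alpha$). Let $V$ denote the connected component of $C \cap (\cC_N)_{\leq s^+}|_U$ containing $f$: by the local constancy of $v_p(a_p)$, the slope equals $s$ throughout $V$, and the weight map restricts to a finite morphism $\kappa \colon V \to U$. This restriction is surjective because $V$ is one-dimensional while the fibers of $\kappa$ on $\cC_N$ are zero-dimensional (as $\cC_N$ is equidimensional of dimension one dominating a one-dimensional target); so $\kappa|_V$ is non-constant, hence dominant, and a finite dominant morphism from an affinoid curve to a connected affinoid is surjective.

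Now $Z \cap V$ is a discrete subset of an affinoid, hence finite, so $\kappa(Z \cap V) \subset U$ is a finite set. For $M$ sufficiently large, every integer $k'$ with $k' \equiv k \pmod{(p-1)p^M}$ corresponds to a weight character lying in $U$; among these infinitely many $k'$ I choose one with $k' > 2s+1$ and $k' \notin \kappa(Z \cap V)$, which is possible since only finitely many values are forbidden and the arithmetic progression contains arbitrarily large integers. By surjectivity of $\kappa|_V$, pick any $g \in V$ with $\kappa(g) = k'$. Then $g$ has integer weight $k' > 2s+1$ and slope $s$; since $s < k' - 1$, Coleman's classicality theorem guarantees that $g$ is a classical normalized eigenform of level $\Gamma_1(Np)$; and since $g \notin Z$, it is neither Eisenstein nor $q$-old for any $q \mid N$, so it is a classical $N$-new cuspidal eigenform with the required invariants.

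The main technical point one must invoke is the existence and finiteness of the slope-bounded piece $(\cC_N)_{\leq s^+}|_U \to U$ over a small disk $U$: this is standard in the theory of eigenvarieties, but requires the slope decomposition of the characteristic power series to persist on $U$, which is arranged by placing $s^+$ in a gap of the slopes at $k$ and then shrinking $U$. A secondary subtlety is the zero-dimensionality of fibers of $\kappa$, used to secure surjectivity of $\kappa|_V$; this is guaranteed by equidimensionality of $\cC_N$.
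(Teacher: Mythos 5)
Your proposal follows the same overall strategy as the paper: isolate the discrete bad loci (Eisenstein and $q$-old), work locally over a small weight disc, show that the nearby integral weights $k'$ support a point of $C$ of slope $s$ and then invoke Coleman's classicality theorem. However, there is a genuine gap at the central step, namely the assertion that ``by the local constancy of $v_p(a_p)$, the slope equals $s$ throughout $V$.'' Local constancy of $v_p(a_p)$ at rigid (classical) points --- which does hold, by the familiar argument that $|1+g|=1$ whenever $|g|<1$ --- does not imply that $v_p(a_p)$ is constant on a connected affinoid. The corresponding statement for the Berkovich topology is false: for example, on $\mathrm{Sp}\bigl(K\langle w\rangle[T]/(T^2+wT+p)\bigr)$, which is an irreducible finite flat double cover of the closed unit disc, the element $T$ is a unit (since $T^{-1}=-(T+w)/p$), yet $|T|$ equals $p^{-1/2}$ on the fibre over $w=0$ and takes the values $\{1,p^{-1}\}$ on fibres over $|w|=1$. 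Choosing $s^{+}$ in a slope gap at $w=0$ and shrinking $U$ does not automatically fix this: if the slopes at $w=0$ are repeated (e.g.\ both $1/2$), no gap above $s$ is available and the slope--$\le s^{+}$ piece can have genuinely varying slopes over any disc on which it is finite. So ``slope constant on $V$'' requires an argument, not merely the local observation at $f$.

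The paper supplies precisely this missing argument by a quantitative Newton polygon analysis on the spectral curve. Writing $Q_k(T)$ for the specialization of the defining polynomial $Q(T)$ of the relevant piece of the spectral curve at $w=x_k$, it identifies the right endpoint $(m_s,\lambda_s)$ of the slope--$s$ segment of the Newton polygon of $Q_k$, and shows that if $v_p(x_k-x_{k'})>\lambda_s+ns$ (which holds once $k'\equiv k\bmod(p-1)p^M$ for $M$ large), then the Newton polygons of $Q_k$ and $Q_{k'}$ agree on all segments of slope $\le s$. This guarantees a root of $Q_{k'}$ of $p$-adic valuation $-s$, and hence a point $y'$ of $Y$ over $x_{k'}$ of slope $s$; since $\pi_1\colon C\to Y$ is a finite surjection, one then lifts $y'$ to $g\in C$. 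That Newton polygon step (or some equivalent quantitative statement about the persistence of the slope--$s$ part of the fibre under small $p$-adic perturbations of the weight) is exactly what your write-up is missing; once you supply it, the remainder of your argument (avoiding the finite set $\kappa(Z\cap V)$, applying Coleman classicality, concluding $g$ is $N$-new) goes through and parallels the paper.
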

\begin{proof}
Put $x=\kappa(f)$. By assumption, for any prime $q\mid N$ there exists an analytic subset $F_q$ of $C$ satisfying 
\[
C\cap\frX_N^{q\text{-old}}\subseteq F_q \subsetneq C.
\]
Consider the decomposition $F_q=\bigcup_{i\in I} F_{q,i}$ into irreducible components. If $F_{q,i}$ is equidimensional of dimension one, then \cite[Corollary 2.2.7]{Con_irr} yields $C=F_{q,i}$, which contradicts $C\neq F_q$. Therefore, $F_{q,i}$ and $F_q$ are discrete subsets. Moreover, we have a discrete subset $E$ of $C$ which contains all points corresponding to classical Eisenstein series.
Hence $F=E\cup \bigcup_{q\mid N}F_q$ is also a discrete subset of $C$.

Now let us consider the projection $\pi_1: \cC_N\to Z_1$ to the spectral curve $Z_1$ for the operator $U_p$, and the projection $\mu:Z_1\to \cW$. Let $Y$ be the image of $C$ by the map $\pi_1$ with the reduced structure. 
Since $\pi_1$ is finite, $Y$ is irreducible.
By \cite[Theorem 4.6]{Buz} and \cite[Proposition 6.3.2]{Che_GLn}, there is an affinoid open neighborhood $U$ of $\pi_1(f)$ in $Z_1$ with $\mu(U)=V\subseteq \cW$ connected affinoid open such that $U$ is finite over $V$ and it is the Fredholm hypersurface defined by a polynomial factor of the characteristic power series of the $U_p$-action on the space of overconvergent modular forms over $V$. The intersection $Y\cap U$ is a union of irreducible components of $U$, and hence it is again a Fredholm hypersurface defined by a polynomial, which we denote by
\[
Q(T)=1+ a_1(w)T+\cdots+a_n(w) T^n.
\]
Since $\mu:U\to V$ is finite and $V$ is irreducible, we have $\mu(Y\cap U)=V$.

Let $\cW^0=\Spf(\bZ_p[[w]])^\rig$ be the component of $\cW$ to which $C$ is mapped. Put $\mathbf{p}=|2|_p^{-1}p$ and $x_k=(1+\mathbf{p})^k$. Note that the point $x\in \cW^0$ is defined by $w=x_k$, and we have $V\subseteq \cW^0$.
We denote by $Q_k(T)$ the specialization of $Q(T)$ at $w=x_k$. Then the image $\pi_1(f)\in Z_1$ gives a root of $Q_k(T)$ of $p$-adic valuation $-s$.

Let $(m_s, \lambda_s)$ be the right endpoint of the segment of slope $s$ in the Newton polygon of $Q_k(T)$.
Note that, for any positive integers $M$ and $t$ with $p\nmid t$, we have
\[
v_p((1+\mathbf{p})^{p^M t}-1)= M+v_p(\mathbf{p}).
\]
Thus there exists an integer $M'$ such that, if an integer $k'$ satisfies $k\equiv k'\bmod (p-1)p^{M}$ for some $M>M'$, then we have 
\[
v_p(x_k -x_{k'}) >\lambda_s+n s.
\]
This yields 
\[
\min\{ v_p(a_l(x_k)), \lambda_s\}=\min\{ v_p(a_l(x_{k'})), \lambda_s\}
\]
for any $l\leq m_s$ and $v_p(a_l(x_{k'}))>\lambda_s+(l-m_s) s$ for any $l> m_s$. Hence the Newton polygons of $Q_k(T)$ and $Q_{k'}(T)$ coincide with each other on the segments of slopes no more than $s$.

Take a closed disc $D$ centered at $x_k$ of some radius $\rho$ sufficiently small so that $D$ is an admissible open subset of $V$. Let $M$ be any integer satisfying $M > \max\{-\log_p \rho,M'\}$. Put
\[
D'=\{w \in D\mid |w-x_k|_p=p^{-(M+v_p(\mathbf{p}))} \}.
\]
Since $U'=Y\cap U\cap (D'\times B[0,p^{s}])$ is affinoid and $\pi_1(F)$ is a discrete subset of $Y$, the intersection $U'\cap \pi_1(F)$ is a finite set. Thus we can find an integer $k'>2s+1$ of the form $k'=k+t(p-1)p^M$ with $p\nmid t$ such that $x_{k'}$ is contained in the set 
\[
D'\setminus \mu(U'\cap \pi_1(F)).
\] 
Since $Q_{k'}(T)$ has roots of $p$-adic valuation $-s$, we can take $y'\in Y\cap U$ in the fiber of $x_{k'}$ corresponding to such a root. Since $y'\in U'$, the choice of $x_{k'}$ implies $y'\notin \pi_1(F)$. By Coleman's classicality theorem \cite[Theorem 1.1]{Col_cl2}, any point $g\in C$ above $y'$ is a classical eigenform of level $\Gamma_1(Np)$. Since $y'\notin \pi_1(F)$, the eigenform $g$ is $N$-new. This concludes the proof.
\end{proof}



\section{Proof of the main theorem}

In this section, we prove Theorem \ref{main}.
Let $\cW^0=\Spf(\bZ_p[[w]])^\rig$ be the component of $\cW$ containing the image of $C$ for the weight map $\kappa:\cC_N\to \cW$. Since every Eisenstein component is finite over $\cW$, we may assume that $C$ is cuspidal.

First we assume that $C$ is $N$-new. In this case, we begin with following the proof of \cite[Lemma 7.4.3]{CM}: Since the image $\kappa(C)$ only omits finitely many points of $\cW^0$, there exists $f\in C$ of some integral weight and level $\Gamma_1(Np)$. By Lemma \ref{qnewdense}, we can find an $N$-new classical eigenform $g\in C$ of weight $k$, level $\Gamma_1(Np)$ and slope $s< (k-1)/2$.

We denote by $M_k(\Gamma_1(Np))^{\leq s}$ the space of classical modular forms of weight $k$, level $\Gamma_1(Np)$ and slope no more than $s$.
By the classicality theorem, it is equal to the space of overconvergent modular forms satisfying the same conditions. By the choice of $s$, the operator $U_p$ acts semi-simply on $M_k(\Gamma_1(Np))^{\leq s}$.
Then, as in the proof of \cite[Sublemma 6.2.3]{CM}, there exists $\alpha\in 1+p\cH$ such that
the $\alpha U_p$-eigenspace of $M_k(\Gamma_1(Np))^{\leq s}$ containing $g$ does not contain eigenforms other than $g$. Furthermore, the generalized $\alpha U_p$-eigenspace of $M_k(\Gamma_1(Np))^{\leq s}$ containing $g$ is one-dimensional; otherwise its $N$-old part or Eisenstein part would be a non-trivial $\cH$-stable subspace and thus contain an eigenform other than $g$. Consider the projection $\pi_\alpha: \cC_N\to Z_\alpha$ to the spectral curve for $\alpha U_p$.
Then the image $\pi_{\alpha}(g)\in Z_{\alpha}$ gives a simple root of the characteristic power series of $\alpha U_p$ specialized at weight $k$. This means that $\pi_\alpha$ maps $C$ onto an irreducible component $Y$ of $Z_\alpha$ without multiplicity. Note that $Y$ is also of finite degree.

Let $F$ be any finite subset of $\cW^0$ such that the map $\mu:Y\setminus \mu^{-1}(F) \to \cW^0\setminus F$ is finite.

\begin{lem}\label{genisom}
There exists a discrete subset $E$ of $Y\setminus \mu^{-1}(F)$ such that the map
\[
C|_{(Y\setminus \mu^{-1}(F) )\setminus E} \to (Y\setminus \mu^{-1}(F) )\setminus E
\]
induced by $\pi_\alpha$ is generically isomorphic. Namely, there exists a discrete subset $E'$ of $(Y\setminus \mu^{-1}(F))\setminus E$ such that the map
\[
C|_{((Y\setminus \mu^{-1}(F) )\setminus E)\setminus E'}\to ((Y\setminus \mu^{-1}(F) )\setminus E)\setminus E'
\]
is an isomorphism.
\end{lem}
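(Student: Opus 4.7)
My plan is to identify $\pi_\alpha|_C : C \to Y$ as a finite surjective morphism of reduced irreducible one-dimensional rigid analytic curves of generic degree one, then to extract the discrete loci $E$ and $E'$ by standard commutative algebra applied locally on an admissible affinoid cover of $Y \setminus \mu^{-1}(F)$.

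First, $\pi_\alpha|_C$ is finite since $\pi_\alpha : \cC_N \to Z_\alpha$ is, and surjective by the definition of $Y = \pi_\alpha(C)$ with its reduced structure; both $C$ and $Y$ are reduced and equidimensional of dimension one. For the generic degree one statement, I would build on the setup of the paragraph preceding the lemma: $\alpha$ was chosen so that the generalized $\alpha U_p$-eigenspace of $M_k(\Gamma_1(Np))^{\leq s}$ containing $g$ is one-dimensional, equivalently $\pi_\alpha(g)$ is a simple root of the characteristic power series of $\alpha U_p$ at weight $k$. Combining the Buzzard eigenvariety machine with the identification $\cD_{N,\red}\simeq \cC_N$ over $\cW$, these multiplicity-one conditions imply that the completed local ring of $\cC_N$ at $g$ maps isomorphically to the completed local ring of $Z_\alpha$ at $\pi_\alpha(g)$. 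Since $C$ is the unique irreducible component of $\cC_N$ through $g$ mapping onto $Y$ (cut out by the Hecke eigensystem of $g$), this forces $\pi_\alpha|_C$ to have local rank one at $g$, hence generic degree one.

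With generic degree one in hand, I would cover $Y \setminus \mu^{-1}(F)$ by admissible affinoids $U = \Spv(A)$ and set $C|_U = \Spv(B)$, which is affinoid by finiteness. The induced map $A \hookrightarrow B$ is a finite injection of reduced one-dimensional affinoid algebras inducing an isomorphism of total rings of fractions, so $B/A$ is a finitely generated torsion $A$-module supported at finitely many closed points of $U$. I would declare $E \cap U$ to consist of those support points whose scheme-theoretic fibers in $C$ contain more than one geometric point (multiple preimages), and $E' \cap U$ to consist of the remaining support points (single preimage, but the map of local rings is not an isomorphism). Away from $E \cup E'$, the localization of $A \to B$ becomes an isomorphism. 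Patching over a locally finite admissible cover (for example obtained by pulling back a concentric annular exhaustion of a neighborhood of $x_k$ in $\cW^0$ along the finite map $\mu$) yields discrete analytic subsets $E$ and $E'$ of $Y \setminus \mu^{-1}(F)$ with the required isomorphism property outside their union.

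The main obstacle, as I see it, is the rigorous translation of the multiplicity-one condition at $g$ into the statement that $\pi_\alpha|_C$ has local rank one. This requires unwinding the Buzzard construction to identify the completed local rings of $Z_\alpha$ at $\pi_\alpha(g)$ and of $\cC_N$ at $g$ with the appropriate factors coming from the characteristic power series of $\alpha U_p$ and the associated Hecke algebra, respectively, acting on the slope $\leq s$ part of overconvergent modular forms near weight $k$. Once this correspondence is in place, the multiplicity-one statements yield an isomorphism on completed local rings, and the passage from local rank one at a single point to generic degree one uses the irreducibility of $C$ and $Y$. After that key step, the extraction of $E$ and $E'$ is essentially formal commutative algebra over one-dimensional reduced affinoid rings.
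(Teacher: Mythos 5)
Your plan is sound in outline but it diverges from the paper's proof in a way worth spelling out, and your central step is left as an acknowledged gap rather than being discharged.

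The paper does not argue directly that $\pi_\alpha|_C\colon C\to Y$ has generic degree one. Instead it writes $P(T)=Q(T)H(T)$ with $Q$ the irreducible factor defining $Y$, observes that the locus $G=\{x\mid (Q_x,H_x)\neq 1\}$ is discrete (it is the image under $\mu$ of the discrete set $E_0$ where $Y$ meets other components of $Z_\alpha$), and sets $E=\mu^{-1}(G)$. Over an admissible affinoid $W\subseteq(\cW^0\setminus F)\setminus G$, coprimality of $Q_W$ and $H_W$ makes $V=Y|_W$ a genuine member of the canonical admissible covering of $Z_\alpha|_W$, and at that point the argument of \cite[Proposition 7.1.3]{CM} applies to give that $\cD_N|_V\to V$ is generically an isomorphism; $E'$ is the resulting non-isomorphism locus, and the passage from $\cD_N$ to $C$ is the final one-line observation that a surjection of reduced rigid spaces onto something already shown to be isomorphic to the target must itself be an isomorphism. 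So the two discrete sets $E,E'$ in the paper encode two genuinely different phenomena: where $Y$ fails to split off cleanly from $Z_\alpha$ fibrewise, and where the eigencurve fails to be isomorphic to the spectral curve over the clean locus. Your $E,E'$ (multiple preimage vs.\ single preimage with a nontrivial local ring map) are a different and equally legitimate partition, which is fine since the lemma only asserts existence.

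The real issue is the step you yourself flag as "the main obstacle." You assert that the multiplicity-one conditions imply the completed local ring of $\cC_N$ at $g$ maps isomorphically to that of $Z_\alpha$ at $\pi_\alpha(g)$, and from this deduce uniqueness of the component through $g$ and local rank one. But this is precisely the nontrivial content that \cite[Proposition 7.1.3]{CM} (and the surrounding material on the eigenvariety machine) is invoked for, and it is \emph{stronger} than what the paper actually uses: the cited result only gives an isomorphism away from a discrete set, which could a priori contain $g$. The paper's route sidesteps the need for an isomorphism at $g$ itself by working with the affinoid factorization. If you want to run your more local version, you need to actually extract from the Buzzard machine that the multiplicity-one hypothesis at weight $k$ (one-dimensional generalized $\alpha U_p$-eigenspace and simple root of the characteristic series) forces the local Hecke algebra at $g$ to coincide with the local ring of the Fredholm factor; you should also be careful that your uniqueness-of-component claim is deduced from, not assumed alongside, this local identification, since as written the two steps lean on each other. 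Once that is done, the commutative-algebra extraction of $E$ and $E'$ from the torsion module $B/A$ over a one-dimensional reduced affinoid is indeed routine, and gluing over a locally finite cover is unproblematic. In short: the strategy works, and is arguably more direct than the paper's, but the key local identification is exactly where the difficulty lives and it is currently asserted rather than proved.
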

\begin{proof}
We denote by $P(T)$ the characteristic power series of $\alpha U_p$ over $\cW^0$ and by
\[
Q(T)=1+a_1(w)T+\cdots+ a_n(w) T^n
\]
the irreducible factor of $P(T)$ defining $Y$. In the ring of entire series $\bZ_p[[w]]\{\{T\}\}$ over $\bZ_p[[w]]$, we can write $P(T)=Q(T)H(T)$, where $Q(T)$ and $H(T)$ have no common factor. For any admissible affinoid open subset $W\subseteq \cW^0$, we denote the restriction of $Q(T)$ to $W$ by $Q_W(T)$.
For any $x\in \cW^0$, we denote the specialization of $Q(T)$ at $x$ by $Q_x(T)$, and similarly for $H(T)$. Note that if $W$ is an admissible affinoid open subset of $\cW^0\setminus F$, then the element $a_n(w)$ is invertible on $W$.

Put $E_0=\bigcup_{Y'\neq Y} Y\cap Y'$, where $Y'$ runs over the set of irreducible components of $Z_\alpha$ other than $Y$. By \cite[Corollary 2.2.7]{Con_irr}, each $Y\cap Y'$ is a discrete subset of $Y$. On each admissible affinoid open subset $U$ of $Y$, the intersection $U\cap E_0$ is a discrete subset since $U$ meets only finitely many $Y'$'s. Thus $E_0$ is also a discrete subset of $Y$.

\begin{lem}
The subset $G=\{x\in \cW^0\setminus F\mid (Q_x(T),H_x(T))\neq 1\}$ of $\cW^0\setminus F$ is discrete.
\end{lem}
\begin{proof}
It is enough to show that for any admissible affinoid open subset $W$ of $\cW^0\setminus F$, the intersection $W\cap G$ is a discrete subset of $W$. 
We claim that $W\cap G=\mu(Y|_W \cap E_0)$. Indeed, take $x\in W\cap G$. We have $(Q_x(T),H_x(T))\neq 1$ in the ring $K(x)\{\{T\}\}$, where $K(x)$ is the residue field of $x$. From \cite[Lemma 4.1.1 (1)]{Con_irr}, we see that the natural maps
\[
\xymatrix{
K(x)[T]/(Q_x(T)) \ar[rd]\ar[d] & \\
K(x)\{\{T\}\}/(Q_x(T))\ar[r]& K(x)\langle p^m T \rangle/(Q_x(T))
}
\]
are isomorphic for any sufficiently large $m$. Therefore,
the natural map
\[
K(x)\{\{T\}\}/(Q_x(T),H_x(T))\to K(x)\langle p^m T \rangle/(Q_x(T),H_x(T))
\]
is also an isomorphism for any sufficiently large $m$. 
Thus, for the closed ball $B[0,p^m]\subseteq \bA^1$ of sufficiently large radius $p^m$ centered at the origin, there exists $y\in (W\times B[0,p^m])\cap Y$ with $\mu(y)=x$ such that $y$ is also contained in the analytic subset of $\cW^0\times \bA^1$ defined by $H(T)$. By \cite[Theorem 4.3.2]{Con_irr}, the latter is the union of irreducible components of $Z_\alpha|_{\cW^0}$ other than $Y$. This implies $y\in E_0$ and $x\in \mu(Y|_W\cap E_0)$. Conversely, take $y\in Y\cap E_0$ satisfying $x=\mu(y)\in W$. Then $y\in W\times B[0,p^m]$ for a sufficiently large $m$. The point $y$ defines a maximal ideal of the ring $K(x)\langle p^m T \rangle/(Q_x(T),H_x(T))$. Hence we have $(Q_x(T),H_x(T))\neq 1$ in the subring $K(x)\{\{T\}\}$ of $K(x)\langle p^m T \rangle$ and the claim follows.
Since $\mu:Y|_W\to W$ is finite,  $\mu(Y|_W \cap E_0)$ is a discrete subset of $W$.
\end{proof}

Put $E=\mu^{-1}(G)$. Since the map $\mu: Y\setminus \mu^{-1}(F)\to \cW^0\setminus F$ is finite, the subset $E$ of $Y\setminus \mu^{-1}(F)$ is discrete. 

Consider any admissible affinoid open subset $W$ of $(\cW^0\setminus F)\setminus G$. It is also an admissible affinoid open subset of $\cW^0$. Put $\tilde{Q}_W(T)=a_n(w)^{-1}Q_W(T)$. From the definition of the resultant (see \cite[\S A3]{Col_pBan}), we see that $\mathrm{Res}(\tilde{Q}_W,H_W)_x=\mathrm{Res}(\tilde{Q}_x,H_x)$ for any $x\in W$, where $\tilde{Q}_x$ is the specialization of $\tilde{Q}_W$ at $x$. By \cite[Lemma A3.7]{Col_pBan}, we have $(Q_W(T),H_W(T))=1$. Thus the factor $Q_W(T)$ gives an admissible affinoid open subset $V$ of $Z_\alpha|_{W}$ which is a member of its canonical admissible covering (see for example \cite[Proposition 6.3.2]{Che_GLn}). In fact, we have $V=Y|_W$, since $V$ is a clopen subset of $Z_\alpha|_W$ defined by $Q_W(T)$.

Since $Y$ is without multiplicity, $V$ is reduced. Since the proof of \cite[Proposition 7.1.3]{CM} is valid also in our situation, the projection $\cD_N|_V \to V=Y|_W$ is generically isomorphic. 
Note that $\cD_N|_V$ and $V$ are finite flat of the same degree over $W$ and the locus where this projection is not isomorphic is the analytic subset defined by the determinant of the injection $\cO(V)\to \cO(\cD_N|_V)$. 
By gluing, we obtain a discrete subset $E'$ of $(Y\setminus \mu^{-1}(F))\setminus E$ such that the map 
\[
\cD_N|_{((Y\setminus \mu^{-1}(F))\setminus E)\setminus E'} \to ((Y\setminus \mu^{-1}(F))\setminus E)\setminus E'
\]
is an isomorphism. 

Since $C$ is an irreducible component of $\cD_{N,\red}\simeq \cC_N$, we have the commutative diagram
\[
\xymatrix{
\cD_N|_{((Y\setminus \mu^{-1}(F))\setminus E)\setminus E'} \ar[r]^-{\sim} & ((Y\setminus \mu^{-1}(F))\setminus E)\setminus E'\\
C|_{((Y\setminus \mu^{-1}(F))\setminus E)\setminus E'}, \ar[ru]\ar[u] &
}
\]
where the vertical arrow is the natural closed immersion and the oblique arrow is a surjection between reduced rigid analytic varieties. Hence the oblique arrow is an isomorphism.
\end{proof}

Let $K/\bQ_p$ be any finite extension and take any $x\in \cW^0(K)$. By Proposition \ref{Lutke}, there exists a closed disc $D$ of sufficiently small radius in $|K^\times|_p$ centered at $x$ such that $D$ is an admissible open subset of $\cW^0$, $D^\times=D\setminus \{x\}\subseteq \cW^0\setminus F$ and for some finite extension $L/K$, we have an isomorphism
\[
\coprod_{i=1,\ldots,M} D_{i,L}^\times \to Y_L |_{D_L^\times}
\]
over $D_L^\times$, where $D_{i,L}$ is a closed disc of some radius over $L$ centered at the origin $O$, $D^\times_{i,L}=D_{i,L}\setminus \{O\}$ and the map $D_{i,L}^\times \to D_L^\times$ is given by $z\mapsto x+z^{m_i}$ for some positive integer $m_i$.

Consider any component $Y_{L,i}^\times\simeq D^\times_{i,L}$ of $Y_L|_{D^\times_L}$. By Lemma \ref{genisom}, there exist discrete subsets $E_i\subseteq D^\times_{i,L}$ and $E'_i \subseteq D^\times_{i,L}\setminus E_i$ such that the map $\pi_{\alpha,L}: C_L \to Y_L$ induces an isomorphism 
\[
C_L|_{(D^\times_{i,L} \setminus E_i)\setminus E'_i} \simeq (D^\times_{i,L} \setminus E_i)\setminus E'_i.
\]
Thus we have a section 
\[
s_i:(D^\times_{i,L}\setminus E_i)\setminus E'_i \to C_L 
\]
of $\pi_{\alpha,L}$. 

We take a covering of $D_{i,L}^\times$ adapted to the discrete subset $E_i$, as in the proof of Lemma \ref{PPdiscr}. Namely, we take an admissible covering $D^\times_{i,L}=\bigcup_{j\in \bZ_{\geq 0}} U_{i,j}$ consisting of
\[
{U}_{i, j}=\{ y\in D_{i,L} \mid \rho'_{i,j} \leq|y|_p \leq \rho_{i,j}\} 
\]
for some positive real numbers $\rho_{i,j},\rho'_{i,j} \in |L^\times|_p$. Take a finite Galois extension $L_{i,j}/L$ such that every element of
\[
(E_i\cap U_{i,j})_{L_{i,j}}=\{y_{i,j,1},\ldots, y_{i,j,n_{i,j}}\}
\]
is $L_{i,j}$-rational. Take mutually disjoint sufficiently small closed discs
\[
D_{i,j,k}=\{y\in D_{i,L_{i,j}}\mid |y-y_{i,j,k}|_p \leq r_{i,j}\}
\]
with $r_{i,j}\in |L_{i,j}^\times|_p$ and put
\[
V_{i,j}=\{ y\in {U}_{i,j,L_{i,j}}\mid |y-y_{i,j,k}|_p\geq s_{i,j}\text{ for any }k\}
\]
for some $s_{i,j}\in |L_{i,j}^\times|_p$ satisfying $s_{i,j}<r_{i,j}$.

On each small punctured disc $D_{i,j,k}^\times=D_{i,j,k}\setminus \{y_{i,j,k}\}$, the subset $E'_i\cap D_{i,j,k}^\times$ is discrete. By Lemma \ref{PPdiscr} and Theorem \ref{PPness}, the morphism $s_i$ restricted to $D_{i,j,k}^\times\setminus (E'_i\cap D_{i,j,k}^\times)$ extends to a morphism $g_{i,j,k}: D_{i,j,k}\to \cC_{N,L_{i,j}}$. Since the set $D_{i,j,k}^\times\setminus (E'_i\cap D_{i,j,k}^\times)$ is Zariski dense in $D_{i,j,k}$, we have $g_{i,j,k}(D_{i,j,k})\subseteq C_{L_{i,j}}$ and since $D_{i,j,k}$ is reduced, it really factors through $D_{i,j,k}\to C_{L_{i,j}}$. 

On each admissible affinoid open subset $V_{i,j}$, the intersection $E'_i\cap V_{i,j}$ is finite. Thus, again taking a sufficiently small closed disc centered at each element of $E'_i\cap V_{i,j}$ and applying Theorem \ref{PPness} as in the proof of Lemma \ref{PPdiscr}, we obtain an extension $V_{i,j}\to C_{L_{i,j}}$ of the section $s_i$.

These extended maps coincide with each other on a Zariski dense subset of $D_{i,j,k}\cap V_{i,j}$. Thus, by gluing and the Galois descent as before, they define morphisms $U_{i,j}\to C_L$, which also glue to yield a morphism $D^\times_{i,L} \to C_L$. Again by Theorem \ref{PPness}, it extends to a morphism $g_i:D_{i,L} \to C_L$ such that its restriction to $(D^\times_{i,L}\setminus E_i)\setminus E'_i$ is equal to the section $s_i$. 
Taking the direct sum of $g_i$'s, we obtain a morphism 
\[
g: S=\coprod_{i=1,\ldots,M}D_{i,L} \to C_L.
\]

On the $i$-th component $D_{i,L}$ of $S$, the composite 
\[
h_i=\kappa\circ g_i:D_{i,L} \to C_L \to \cW^0_L
\] 
coincides with the map $z\mapsto x+z^{m_i}$ on $(D^\times_{i,L}\setminus E_i)\setminus E'_i$. Since $D_{i,L}$ is reduced and $\cW^0_L$ is separated, the Zariski density of $(D^\times_{i,L}\setminus E_i)\setminus E'_i \subseteq D_{i,L}$ implies that $h_i$ itself coincides with this  map. 
Thus it factors through $D_L\subseteq \cW_L^0$ and we obtain a morphism $g: S\to C_L|_{D_L}$. Moreover, $S$ is finite over $D_L$. Similarly, since $(D_{i,L}^\times\setminus E_i)\setminus E'_i\subseteq D_{i,L}^\times$ is also Zariski dense, we see that the composite 
\[
D^\times_{i,L}\subseteq D_{i,L} \to C_L \to Y_L 
\]
induces the isomorphism $D_{i,L}^\times \simeq Y_{L,i}^\times$.
Therefore, the image of the composite 
\[
\pi_{\alpha,L}\circ g: S\to C_L|_{D_L}\to Y_L|_{D_L}
\]
is the complement in $Y_L|_{D_L}$ of a finite subset of the fiber over $x$. 
By Lemma \ref{Yfinite}, the map $Y_L|_{D_L}\to D_L$ is finite. Hence we conclude that the map $Y \to \cW^0$ is finite. Since $C$ is finite over $Y$, the theorem follows for $N$-new components and the case $N=1$.

Now we proceed by induction on $N$. 
Suppose that the theorem is proved for any tame level less than $N$. To show the theorem for the case of tame level $N$, we may assume that $C$ is $q$-old for some prime $q\mid N$. Put $M=q^{-1}N$. 
Lemma \ref{qTU} implies that there exist an irreducible component $C'$ of $\cC_M$ and a rigid analytic variety $C''$ with finite surjections 
\[
C'\gets C''\to C
\]
over $\cW$. Since $C$ is of finite degree, $C''$ is also of finite degree over $\cW$. Thus $C'$ is also of finite degree.
By the induction hypothesis, $C'$ is finite over $\cW$ and so is $C''$. Let $Y$ be the image of $C$ by the projection $\pi_1:\cC_N\to Z_1$ with the reduced structure. Note that $Y$ is also of finite degree. Since the composite $C'' \to C \to Y$ is surjective, Lemma \ref{Yfinite} implies that $Y$ is finite over $\cW$. Thus $C$ is also finite over $\cW$.
This concludes the proof of Theorem \ref{main}. \qed



\section{Application}\label{SecApp}

Chenevier proved that an irreducible component of an eigencurve which is finite over the weight space is in the ordinary locus if we know that the slopes tend to zero near the boundary of the weight space (\cite[\S 3.7]{Che}. See also \cite[Proposition 3.24]{LWX}). The latter claim on the boundary behavior is a conjecture of Coleman-Mazur-Buzzard-Kilford (see \cite[Conjecture 1.2]{LWX}), and it is shown for the case of $N=1$, $p=2$ \cite[Theorem A]{BuzKil} and for quaternionic eigencurves \cite[Theorem 1.3]{LWX}. 
Moreover, for the case of $N=1$ and $p=3$, it is also shown on the even component of the weight space, namely the component where the weight characters $\kappa$ satisfy $\kappa(-1)=1$ \cite[Theorem 1.1]{Roe}. 
Thus, by combining Theorem \ref{main} with those results (and for quaternionic eigencurves, also with the $p$-adic Jacquet-Langlands correspondence \cite{Che_JL}), we obtain the following corollaries.

\begin{cor}\label{CorBK}
For $N=1$ and $p=2$, the only irreducible component of the eigencurve $\cC_1$ of finite degree is the Eisenstein component.
\end{cor}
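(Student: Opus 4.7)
The plan is to chain Theorem~\ref{main} with Chenevier's argument and the known case of the Coleman--Mazur--Buzzard--Kilford conjecture, then finish by a Hida theory input specific to the pair $(N,p)=(1,2)$. Let $C$ be an irreducible component of $\cC_1$ of finite degree. Applying Theorem~\ref{main}, I immediately upgrade this to $C$ being finite over the weight space $\cW$.

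Next, since the Coleman--Mazur--Buzzard--Kilford conjecture has been established at $N=1$, $p=2$ by Buzzard--Kilford \cite[Theorem A]{BuzKil}, Chenevier's implication \cite[Chapitre~1, \S3.7]{Che} (recalled in a convenient form in \cite[Proposition 3.24]{LWX}) applies and forces $C\subseteq \cC_1^{\ord}$. At this stage the conclusion of the corollary is reduced to showing that the cuspidal ordinary locus $\cC_1^{0,\ord}$ is empty for $(N,p)=(1,2)$, so that the ordinary component $C$ is perforce Eisenstein.

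For the emptiness of $\cC_1^{0,\ord}$, I would invoke Hida's vertical control theorem. Since $X_0(2)$ has genus zero, one has $S_2(\Gamma_1(2))=S_2(\Gamma_0(2))=0$, so the weight~2 specialization of the ordinary $\Lambda$-adic cuspidal Hecke algebra at tame level one and $p=2$ is trivial. By Hida's control theorem this forces the whole ordinary $\Lambda$-adic cuspidal Hecke algebra to vanish, i.e.\ $\cC_1^{0,\ord}=\emptyset$. Combined with the containment $C\subseteq\cC_1^{\ord}$ this leaves only the Eisenstein possibility, proving the corollary.

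The main obstacle is not in the rigid analytic content, which is delivered in full by Theorem~\ref{main}, but in a careful citation job: one must check that Chenevier's implication as written covers the full (not just cuspidal) eigencurve, and that Hida's vertical control theorem applies without modification in the slightly exceptional $p=2$ setting (where $\Gamma_0(2)=\Gamma_1(2)$). Both points are well documented in the literature, so this corollary should be a short assembly rather than a substantive new argument.
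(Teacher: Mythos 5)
Your proof takes essentially the same route as the paper: apply Theorem~\ref{main} to upgrade finite degree to finite over $\cW$, then invoke Chenevier's argument together with the Buzzard--Kilford theorem to land in the ordinary locus, and finally observe that for $(N,p)=(1,2)$ the ordinary cuspidal locus is empty. The paper leaves that last step implicit, citing only the chain Theorem~\ref{main} $\Rightarrow$ Chenevier/\cite{BuzKil}, whereas you supply a Hida-theoretic justification; this is a reasonable addition rather than a different argument.

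One small imprecision in your supplied justification: for $p=2$ the weight space $\cW$ (equivalently the Iwasawa algebra $\bZ_2[[\bZ_2^\times]]$) has \emph{two} connected components, indexed by the characters of the torsion subgroup $\{\pm1\}$, and the specialization at weight~$2$ with trivial nebentype sits on only one of them. Checking $S_2(\Gamma_1(2))=0$ therefore kills the ordinary $\Lambda$-adic cuspidal Hecke algebra only over that single component; to kill it over the other component one should also check an arithmetic specialization lying there, e.g.\ weight~$2$ with nebentype the odd character of conductor~$4$, where $S_2(\Gamma_1(4))=0$ because $X_1(4)$ has genus zero. This is easy to fill in and does not change the structure of your argument, but as written the vanishing claim for $\cC_1^{0,\ord}$ is only verified on one of the two components.
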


\begin{cor}\label{CorRoe}
For $N=1$ and $p=3$, the only irreducible component of the eigencurve $\cC_1$ of finite degree over the even component of the weight space is the Eisenstein component.
\end{cor}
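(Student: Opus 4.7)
The plan is to combine three ingredients in complete analogy with the proof of Corollary \ref{CorBK} just above. Let $C$ be an irreducible component of $\cC_1$ of finite degree whose image under the weight map lies in the even component $\cW^{\mathrm{ev}} \subseteq \cW$; we will show that $C$ is the Eisenstein component.

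First, invoke Theorem \ref{main}: the weight map $C \to \cW^{\mathrm{ev}}$ is not merely of finite degree but actually finite. This is the new input that was missing from Chenevier's original conditional argument. Second, by Roe's theorem \cite[Theorem 1.1]{Roe}, the Coleman-Mazur-Buzzard-Kilford conjecture is known to hold for $\cC_1$ over $\cW^{\mathrm{ev}}$ when $p=3$: the $U_p$-slopes of overconvergent eigenforms tend to zero as the weight character approaches the boundary of $\cW^{\mathrm{ev}}$. Third, apply Chenevier's argument \cite[Chapitre 1, \S 3.7]{Che} (cf.\ \cite[Proposition 3.24]{LWX}): if $C$ were non-ordinary, its $U_p$-slope function would be bounded below by a positive constant on all of $C$ (since $C$ is finite over $\cW^{\mathrm{ev}}$, this constant is attained), yet the finiteness of $C$ forces $C$ to meet weights arbitrarily near the boundary of $\cW^{\mathrm{ev}}$, where the slopes of eigenforms go to zero---contradicting the Roe bound. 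Hence $C \subseteq \cC_1^{\ord}$.

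It then remains to verify that the only ordinary irreducible component of $\cC_1$ over $\cW^{\mathrm{ev}}$ at $p=3$ is the Eisenstein component. By Hida theory, a non-Eisenstein ordinary component would interpolate, at the classical weights in the relevant residue class, ordinary $3$-stabilizations of cuspforms of level $\Gamma_1(3)$. The lowest-weight cuspform of level $\mathit{SL}_2(\bZ)$ is $\Delta$ (weight $12$), but $v_3(\tau(3)) = v_3(252) = 2$, so the Newton polygon of $X^2 - \tau(3)X + 3^{11}$ has slopes $2$ and $9$, and $\Delta$ admits no ordinary $3$-stabilization; a similar direct check at the remaining controlling low weights (which is a finite task thanks to Hida's control theorem) rules out any non-Eisenstein ordinary Hida family of tame level $1$ at $p=3$. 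Therefore $C$ is the Eisenstein component, as desired.

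The main---but rather mild---obstacle is not in Steps 1--3, which are all direct citations, but in the final step: one must cleanly rule out every possible non-Eisenstein ordinary component of $\cC_1$ over $\cW^{\mathrm{ev}}$ at $p=3$. This is a finite computation bounded by Hida's control theorem rather than a deep input, and is the only portion of the argument not already packaged in a quoted theorem.
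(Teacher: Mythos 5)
Your proposal follows the same route as the paper: combine Theorem~\ref{main} (finite degree $\Rightarrow$ finite), Roe's theorem \cite[Theorem 1.1]{Roe} (the Coleman--Mazur--Buzzard--Kilford slope bound on the even component at $p=3$), and Chenevier's argument \cite[Ch.~1, \S3.7]{Che} (finite over $\cW$ plus the slope bound $\Rightarrow$ contained in $\cC_1^{\ord}$). The paper states this combination without elaboration, and you are right to flag that passing from ``ordinary'' to ``the Eisenstein component'' requires one more (implicit) input: that $\cC_1$ has no cuspidal ordinary components over the even weight disc at $p=3$.

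On that last step your write-up is somewhat off target. Checking that $\Delta$ has no ordinary $3$-stabilization does not by itself control weight $12$: the space $S_{12}(\Gamma_0(3))$ contains, besides the two stabilizations of $\Delta$, a newform of level $\Gamma_0(3)$ whose $U_3$-slope would also have to be examined. More to the point, the phrase ``a similar direct check at the remaining controlling low weights'' misstates how Hida's control theorem is used: the $\Lambda$-rank of the ordinary cuspidal Hecke algebra on the even component is computed by evaluation at a \emph{single} classical weight $k\ge 2$, not a range of them. The clean route is to take $k=2$: since $X_0(3)$ has genus $0$, we have $S_2(\Gamma_0(3))=0$, hence $S_2^{\ord}(\Gamma_1(3))$ on the even component vanishes, hence the ordinary cuspidal Hecke algebra over the even component of $\Lambda$ has rank $0$. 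There are therefore no cuspidal ordinary families of tame level $1$ at $p=3$ on the even component, and the only ordinary component is Eisenstein, as required. With that repair the argument is complete and aligned with the paper's intended proof.
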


\begin{cor}\label{CorLWX}
Let $D$ be a quaternion algebra over $\bQ$ of discriminant $d$ satisfying $(Np,d)=1$. Let $\cC^D_N$ be the eigencurve of tame level $N$ for overconvergent modular forms on $D^\times$. Then every irreducible component of $\cC^D_N$ of finite degree is ordinary.
\end{cor}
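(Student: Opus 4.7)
The plan is to combine Theorem~\ref{main} with the boundary slope theorem of Liu--Wan--Xiao \cite[Theorem 1.3]{LWX}, Chenevier's ordinariness argument \cite[Chapitre 1, \S 3.7]{Che}, and Chenevier's $p$-adic Jacquet--Langlands correspondence \cite{Che_JL}.

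Let $C$ be an irreducible component of $\cC^D_N$ of finite degree over $\cW$. First, I invoke the $p$-adic Jacquet--Langlands correspondence \cite{Che_JL}, which realizes $\cC^D_N$ as a closed analytic subvariety of $\cC_{Nd}$, compatibly with the weight map and with the Hecke action away from $d$ (in particular compatibly with $U_p$, on which ordinariness depends). Since both $\cC^D_N$ and $\cC_{Nd}$ are reduced and equidimensional of dimension one (cf.\ \cite[Lemma 5.8]{Buz}), the image of $C$ in $\cC_{Nd}$ is a one-dimensional irreducible closed analytic subset, and hence by \cite[Corollary 2.2.7]{Con_irr} coincides with an irreducible component $\tilde C$ of $\cC_{Nd}$. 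Because the Jacquet--Langlands embedding has finite fibers over $\cW$, the component $\tilde C$ is again of finite degree.

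Second, Theorem~\ref{main} applied to $\tilde C \subseteq \cC_{Nd}$ shows that $\tilde C$ is finite over $\cW$. Pulling back along the identification $C \simeq \tilde C$ (an isomorphism of reduced one-dimensional irreducible closed analytic sets in bijection), the component $C$ itself is finite over $\cW$.

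Third, I apply Chenevier's argument from \cite[Chapitre 1, \S 3.7]{Che} (see also \cite[Proposition 3.24]{LWX}), which is valid verbatim for $\cC^D_N$: granting the Coleman--Mazur--Buzzard--Kilford conjecture for $\cC^D_N$, any irreducible component which is finite over $\cW$ must have $U_p$-slope identically zero and therefore lies in the ordinary locus. The requisite boundary slope statement is furnished by \cite[Theorem 1.3]{LWX}. Combined with the finiteness of $C$ over $\cW$ established above, this yields the conclusion.

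The main obstacle is the first step: one must verify that the Jacquet--Langlands image of $C$ is a full irreducible component of $\cC_{Nd}$ rather than merely a closed analytic subset, so that Theorem~\ref{main} genuinely applies. This rests on the closedness of the Jacquet--Langlands embedding established by Chenevier together with the equidimensionality of both eigencurves of dimension one.
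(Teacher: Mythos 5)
Your proposal is correct and takes the same route that the paper gestures at in the paragraph preceding the corollary: transfer a finite-degree component of $\cC^D_N$ to a component of $\cC_{Nd}$ via the $p$-adic Jacquet--Langlands correspondence, apply Theorem~\ref{main} to get finiteness over $\cW$, and then conclude ordinariness from Chenevier's criterion together with the Liu--Wan--Xiao boundary-slope theorem. You have simply spelled out the equidimensionality bookkeeping (via \cite[Corollary 2.2.7]{Con_irr}) that the paper leaves implicit.
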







\begin{thebibliography}{9a}

\bibitem[Bel]{Bel}
J. Bella\"{i}che: \emph{Critical $p$-adic $L$-functions}, Invent. Math. \textbf{189} (2012), no. 1, 1--60. 


\bibitem[Buz]{Buz}
K. Buzzard: \emph{Eigenvarieties}, $L$-functions and Galois representations, 59--120,
London Math. Soc. Lecture Note Ser. \textbf{320}, Cambridge Univ. Press, Cambridge, 2007. 


\bibitem[BC]{BuzCal}
K. Buzzard and F. Calegari: \emph{The $2$-adic eigencurve is proper}, Doc. Math. (2006) Extra Vol., 211--232. 

\bibitem[BK]{BuzKil}
K. Buzzard and L. J. P. Kilford: \emph{The $2$-adic eigencurve at the boundary of weight space}, Compos. Math. \textbf{141} (2005), 605-619.

\bibitem[Che1]{Che_GLn} 
G. Chenevier: \emph{Familles $p$-adiques de formes automorphes pour $\mathit{GL}_n$}, J. Reine Angew. Math. \textbf{570} (2004), 143--217.

\bibitem[Che2]{Che_JL}
G. Chenevier: \emph{Une correspondance de Jacquet-Langlands $p$-adique}, Duke Math. J. \textbf{126} (2005), no. 1, 161--194. 

\bibitem[Che3]{Che_Fern}
G. Chenevier: \emph{On the infinite fern of Galois representations of unitary type}, Ann. Sci. \'{E}c. Norm. Sup\'{e}r. (4) \textbf{44} (2011), no. 6, 963--1019.

\bibitem[Che4]{Che}
G. Chenevier: \emph{Repr\'{e}sentations galoisiennes automorphes et cons\'{e}quences arithm\'{e}tiques des conjectures de Langlands et Arthur}, M\'{e}moire d'habilitation \`{a} diriger des recherches, Universit\'{e} Paris XI (2013), available at \verb|http://gaetan.chenevier.perso.math.cnrs.fr/pub.html|.

\bibitem[Che5]{Che_Psc}
G. Chenevier: \emph{The $p$-adic analytic space of pseudocharacters of a profinite group and pseudorepresentations over arbitrary rings}, Automorphic forms and Galois representations, Vol. 1, 221--285, London Math. Soc. Lecture Note Ser., \textbf{414}, Cambridge Univ. Press, Cambridge, 2014. 

\bibitem[Col1]{Col_cl2}
R. Coleman: \emph{Classical and overconvergent modular forms for higher level}, J. Th\'{e}or. Nombres Bordeaux \textbf{9} (1997), no. 2, 395--403.

\bibitem[Col2]{Col_pBan}
R. Coleman: \emph{$p$-adic Banach spaces and families of modular forms}, Invent. Math. \textbf{127} (1997), no. 3, 417--479.

\bibitem[CM]{CM}
R. Coleman and B. Mazur: \emph{The eigencurve}, Galois representations in arithmetic algebraic geometry (Durham, 1996), 1--113, 
London Math. Soc. Lecture Note Ser., \textbf{254}, Cambridge Univ. Press, Cambridge, 1998. 


\bibitem[Con1]{Con_irr}
B. Conrad: \emph{Irreducible components of rigid spaces}, Ann. Inst. Fourier (Grenoble) \textbf{49} (1999), no. 2, 473--541. 

\bibitem[Con2]{Con_MC}
B. Conrad: \emph{Modular curves and rigid-analytic spaces}, Pure Appl. Math. Q. \textbf{2} (2006), no. 1, part 1, 29--110.

\bibitem[Con3]{Con_ample}
B. Conrad: \emph{Relative ampleness in rigid geometry}, Ann. Inst. Fourier (Grenoble) \textbf{56} (2006), no. 4, 1049--1126. 

\bibitem[deJ]{deJong}
A. J. de Jong: \emph{Crystalline Dieudonn\'{e} module theory via formal and rigid geometry}, Publ. Math. Inst. Hautes \'{E}tudes Sci. \textbf{82} (1995), 5--96 (1996).

\bibitem[DL]{DL}
H. Diao and R. Liu: \emph{The eigencurve is proper}, Duke Math. J. \textbf{165}, no. 7 (2016), 1381--1395.



\bibitem[LWX]{LWX}
R. Liu, D. Wan and L. Xiao: \emph{Eigencurve over the boundary of the weight space}, preprint, arXiv:1412.2584v3. 


\bibitem[L\"{u}t]{Lut}
W. L\"{u}tkebohmert: \emph{Riemann's existence problem for a $p$-adic field}, Invent. Math. \textbf{111} (1993), no. 2, 309--330. 

\bibitem[Miy]{Miy}
T. Miyake: \emph{Modular forms}, Springer Monographs in Mathematics. 
Springer-Verlag, Berlin, 2006.

\bibitem[Per]{Persblog}
Persiflage: \emph{Chenevier on the eigencurve}, Blog post at 
\url{http://galoisrepresentations.wordpress.com/2015/04/24/chenevier-on-the-eigencurve/}

\bibitem[Ram]{Ram}
L. Ramero: \emph{Local monodromy in non-Archimedean analytic geometry}, Publ. Math. Inst. Hautes \'{E}tudes Sci. \textbf{102} (2005), 167--280. 

\bibitem[Roe]{Roe}
D. Roe: \emph{The $3$-adic eigencurve at the boundary of weight space}, Int. J. Number Theory \textbf{10} (2014), no. 7, 1791--1806.




\end{thebibliography}
\end{document}